\documentclass[12pt]{article}
\usepackage{amsmath,amsfonts,amssymb,amsthm,bm,verbatim}
\usepackage{fullpage}

\newtheorem{thm}{Theorem}
\newtheorem{lem}[thm]{Lemma}

\newtheorem{conj}[thm]{Conjecture}
\theoremstyle{definition}
\newtheorem{Def}[thm]{Definition}

\DeclareMathOperator\pr{Pr}

\title{Ramsey, Paper, Scissors}
\author{Jacob Fox\thanks{Department of Mathematics, Stanford University, Stanford, CA 94305, USA. Email: {\tt jacobfox@stanford.edu}. Research supported by a Packard Fellowship and by NSF Career Award DMS-1352121.}\and Xiaoyu He\thanks{Department of Mathematics, Stanford University, Stanford, CA 94305, USA. Email: {\tt alkjash@stanford.edu}. Research supported by a NSF GRFP grant number DGE-1656518.}\and Yuval Wigderson\thanks{Department of Mathematics, Stanford University, Stanford, CA 94305, USA. Email: {\tt yuvalwig@stanford.edu}. Research supported by a NSF GRFP grant number DGE-1656518.}}
\date{\today}

\begin{document}

\maketitle

\begin{abstract}
We introduce a graph Ramsey game called Ramsey, Paper, Scissors. This game has two players, Proposer and Decider. Starting from an empty graph on $n$ vertices, on each turn Proposer proposes a potential edge and Decider simultaneously decides (without knowing Proposer's choice) whether to add it to the graph. Proposer cannot propose an edge which would create a triangle in the graph. The game ends when Proposer has no legal moves remaining, and Proposer wins if the final graph has independence number at least $s$. We prove a threshold phenomenon exists for this game by exhibiting randomized strategies for both players that are optimal up to constants. Namely, there exist constants $0<A<B$ such that (under optimal play) Proposer wins with high probability if $s<A\sqrt{n}\log{n}$, while Decider wins with high probability if $s>B\sqrt{n}\log{n}$. This is a factor of $\Theta(\sqrt{\log{n}})$ larger than the lower bound coming from the off-diagonal Ramsey number $r(3,s)$.
\end{abstract}

\section{Introduction}
Ramsey's theorem states for every $m,s \ge 3$, there exists a least positive integer $r(m, s)$ for which every graph on $r(m, s)$ vertices has a clique of order $m$ or an independent set of order $s$. The study of the Ramsey numbers $r(m, s)$ and their many variations has a long history and holds a central place in extremal combinatorics.

One notable problem is the study of the asymptotic growth of $r(3,s)$. The classical argument of Erd\H os and Szekeres \cite{ErSz} yields an upper bound of $r(3,s) \leq \binom{s+1} {2}$, while an involved probabilistic construction of Erd\H os \cite{Er} shows that $r(3,s)=\Omega(s^2/\log^2 s)$. Spencer \cite{Sp1} later used the Lov\'asz Local Lemma to give a simpler proof of this same lower bound. The upper bound was then improved to $O(s^2/\log s)$ by Ajtai, Koml\'os, and Szemer\'edi \cite{AjKoSz} and later Shearer \cite{Sh1} improved the constant factor. Finally, the $\log s$ gap was closed by Kim \cite{Ki}, who proved that $r(3,s)=\Theta(s^2/\log s)$. Subsequently, Bohman \cite{Bohman} used the so-called triangle-free process to reprove Kim's lower bound, and further improvements \cite{BoKe,FiGrMo} to this analysis have determined the asymptotics of $r(3,s)$ up to a factor of $4+o(1)$. For more details on these  results, see the survey of Spencer \cite{Sp2} on the $r(3,s)$ problem.

Many interesting questions in graph Ramsey theory concern the game theory of various graph-building and graph-coloring games, usually played between two players. The earliest example of a graph Ramsey game was studied by Erd\H os and Selfridge~\cite{ErSe}, who studied a class of games known as positional games; a prototypical positional game is the Maker-Breaker game on graphs, in which two players, Maker and Breaker, take turns claiming the edges of a complete graph $K_n$, and Maker wins by building a graph with a prescribed property (such as containing a large clique). A more symmetric version of this game is usually just called the Ramsey game: given a fixed graph $H$, two players take turns claiming edges of a large complete graph until one player wins by building a copy of $H$. The mis\`ere version of this game, in which the first player to build a copy of $H$ loses, has also been studied. For an introduction to these topics, see the book of Hefetz, Krivelevich, Stojakovi{\'c}, and Szab{\'o} \cite{HKSS}.

A game that more closely resembles the one we study in this paper is the so-called online Ramsey game, which starts instead on a large empty graph and involves two players: Builder, who builds an edge on each turn, and Painter, who then paints it red or blue. Builder's goal is to build a monochromatic clique of a certain order $n$ using as few turns as possible. By Ramsey's theorem, Builder can always win by building the edges of a large complete graph, and Painter's goal is simply to delay this eventuality by as long as possible.

Ever since Erd\H os proved the lower bound $r(s, s) \ge 2^{s/2}$ on the classical Ramsey numbers using the probabilistic method, randomness has been ever-present in graph Ramsey theory. All known proofs of exponential lower bounds on $r(s, s)$ use the probabilistic method. Using a careful analysis of random play, Conlon, Fox, He, and Grinshpun~\cite{CoFoGrHe} recently proved that the online Ramsey game takes at least $2^{(2-\sqrt{2})s-O(1)}$ turns, making an exponential improvement on the trivial bound of $2^{s/2-1}$ in that setting. 

In certain cases, randomness is even baked directly into the definition of the game itself. Friedgut, Kohayakawa, R\"odl, Ruci\'nski, and Tetali~\cite{FKRRT} studied Ramsey games against a one-armed bandit: a variation of the online Ramsey game in which Builder builds a uniform random unbuilt edge on each turn from a fixed set of vertices and Painter must color these incoming edges red or blue while avoiding monochromatic triangles for as long as possible.

In this paper, we study another graph-building game we call Ramsey, Paper, Scissors, due to the simultaneous nature of the turns.
The game is played between two players, Proposer and Decider, on a fixed set of $n$ vertices, who jointly build a graph on these vertices one edge at a time. On each turn of the game, Proposer proposes a pair of vertices and Decider simultaneously decides whether to add this pair as an edge. Proposer cannot propose pairs that have been proposed before, nor pairs that would introduce a triangle to the graph if built. It is essential that Proposer and Decider make their moves simultaneously in each turn, so that Proposer doesn't know whether Decider intends to build the edge before proposing it, and Decider doesn't know which pair Proposer will propose. However, after both players make their choice, they each learn the other's choice (so Proposer learns whether the edge was added to the graph, and Decider learns which pair was proposed).

Ramsey, Paper, Scissors ends when Proposer has no legal moves, and Proposer wins if the independence number of the final graph is at least a predetermined value $s$. Note that if $n \geq r(3,s)$, then Proposer always wins, since the final graph must have independence number at least $s$. Thus, Proposer wins if $s \le c\sqrt{n \log n}$ for some constant $c > 0$. On the other hand, Decider can always win if $s=n$ by simply saying YES to the first pair Proposer proposes---this means that the final graph will have at least one edge, and thus must have independence number at most $n-1$. Finally, if $s$ is between these two bounds (i.e.\ $s<n<r(3,s)$), then both players can win with positive probability via randomized strategies. Namely, Proposer can propose pairs randomly, and if Proposer gets lucky, the only edges Decider will say YES to will be the $n-1$ edges incident to a single vertex, thus forcing the final graph to be the star $K_{1,n-1}$ with independence number $n-1$. Of course, if Decider says YES to fewer than $n-1$ pairs then if Proposer is lucky, the independence number is still at least $n-1$. 

For the other randomized strategy, Decider can fix a triangle-free graph $H$ on $n$ vertices with $m$ edges and independence number $s$, and say YES exactly $m$ times, determining these $m$ times completely at random. If Decider gets lucky, then exactly a copy of $H$ is built, thus achieving an independence number of $s$. However, both of these strategies will only succeed with extremely small probability, and it is therefore natural to ask for (randomized) strategies for both Proposer and Decider that succeed with high probability; as usual, we say that an event $E$ happens with high probability (w.h.p.)\ if $\pr(E) \to 1$ as $n \to \infty$. Indeed, our main theorem states that such strategies exist for both players for $s=\Theta(\sqrt{n} \log n)$.

\begin{thm} \label{thm:main}
If $s \leq \frac{1}{1000}\sqrt{n}\log n$ then Proposer can win w.h.p, while if $s \ge 1000 \sqrt{n} \log n$, Decider can win w.h.p.
\end{thm}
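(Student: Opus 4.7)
\emph{Proposer's strategy.} The plan is for Proposer to propose a uniformly random legal pair (unproposed and not triangle-closing) at each turn. The crucial observation is that because Decider commits YES or NO without seeing the current proposal, the conditional distribution of the new edge (given a YES) is uniform over all legal pairs at that moment, regardless of Decider's adaptive scheme. To analyze the game I would track the number of legal pairs $Q_t$ and the edge count $m_t$ via the differential equation method, in the spirit of Bohman's analysis of the triangle-free process. A straightforward accounting gives $m_t = O(n^{3/2})$ at the end of the game, since the accumulated triangle-forming pairs (of order $m_t^2/n$) ultimately force $Q_t$ to zero. To bound $\alpha$ from below, I would fix any $s$-subset $S$ with $s \le \frac{1}{1000}\sqrt{n}\log n$ and estimate $\pr[S \text{ spans an edge}]$ using that each YES-edge is marginally uniform over legal pairs; a first-moment (or Janson-style) argument over the $\binom{n}{s}$ candidates then produces an independent $s$-set with high probability, matching the scaling one would expect from $G(n, 1/\sqrt{n})$.

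\emph{Decider's strategy.} For the upper bound, Decider says YES independently with probability $p = c/\sqrt{n}$ on each turn, for a suitably large constant $c$. A parallel analysis of $Q_t$ shows that the final YES-graph has $\Theta(n^{3/2})$ edges, saturating the triangle-free density limit. A second-moment computation on the number of independent $s$-sets (with $s \ge 1000\sqrt{n}\log n$) then shows that w.h.p.\ no such set survives. The key feature is that Decider's decisions are independent of which specific pair Proposer selects, so each legal pair has an approximately equal chance of becoming an edge and Proposer cannot concentrate proposals in a way that protects any large subset; this uniformity is preserved regardless of Proposer's strategy.

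\emph{Main obstacle.} The main technical hurdle is handling the adaptivity of both players. Decider's YES-probability schedule can correlate across turns with the history, and Proposer can steer proposals to atypical regions of the graph. I plan to use martingale concentration (Freedman-type inequalities) to control the trajectories of $Q_t$, $m_t$, degrees, and codegrees along the game, closely coupled to their triangle-free-process counterparts. The core insight needed to close the $\sqrt{\log n}$ gap is that the game's YES-graph behaves more like the pure random graph $G(n,1/\sqrt{n})$ (with $\alpha = \Theta(\sqrt{n}\log n)$) than like the carefully-structured triangle-free process (with $\alpha = \Theta(\sqrt{n\log n})$), precisely because Decider's blindness at each individual turn prevents the fine-grained control on which pair is added that the pure triangle-free process enjoys.
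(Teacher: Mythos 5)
There is a genuine gap in your Proposer strategy, and it is a fundamental one: the purely random proposal strategy does \emph{not} achieve $\alpha \geq \frac{1}{1000}\sqrt{n}\log n$. If Proposer proposes a uniformly random open pair at each step, Decider can simply answer YES on every turn. The resulting process is then \emph{exactly} the triangle-free process, and Bohman's analysis gives $\alpha = O(\sqrt{n\log n})$ w.h.p.\ for that process. This is smaller than the target $\sqrt{n}\log n$ by a factor of $\sqrt{\log n}$, so Proposer loses. Your heuristic that ``the game's YES-graph behaves more like $G(n,1/\sqrt{n})$ than the triangle-free process'' is backwards for this strategy: when Proposer is uniform and Decider always says YES, you do not get $G(n,p)$; you get precisely the triangle-free process, with its characteristically small independence number. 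The point of the theorem is that Proposer must play a \emph{non-uniform}, structured strategy to force Decider into a bind. The paper does this by partitioning $V$ into blocks, proposing pairs in a prescribed nested order within random ``epochs,'' and showing (via a ``Coins and Buckets'' concentration game) that Decider is forced to ratchet up the YES-density in each successive epoch, which is unsustainable. The differential-equation/Freedman machinery you mention would only reproduce the triangle-free-process trajectory, not beat it.

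On Decider's side your approach is closer in spirit but still underpowered. Saying YES independently with probability $p\approx n^{-1/2}$ is indeed the paper's strategy, but a second-moment count of independent $s$-sets in the final graph does not clearly cope with Proposer's adaptivity: the final graph is not $G(n,p)$ but a subgraph of $G(n,p)$ obtained by Proposer adversarially withholding edges that would create triangles, and the set of edges Proposer can avoid depends on the history. The paper sidesteps this by a structural lemma about $G(n,p)$ (every $t$-set has few pairs lacking a common external neighbor), which implies that \emph{every} ``reachable'' subgraph of $G(n,p)$ --- i.e.\ every graph Proposer could possibly be left with --- has independence number below $t$; a first-moment union bound over $t$-sets then finishes. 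You would need a comparably strong uniform statement over all of Proposer's possible plays for a second-moment argument to close.
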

One surprising consequence of Theorem~\ref{thm:main} is that the fully random strategy is \emph{not} optimal for Proposer. Indeed, if Proposer were to play fully randomly (i.e.\ proposing a uniformly random pair among all remaining open pairs at each step), then Decider can choose to simply answer YES to every proposal. In that case, the game will simply follow the so-called triangle-free process, and Bohman~\cite{Bohman} proved that w.h.p.,\ the triangle-free process produces a graph with independence number $O(\sqrt{n \log n})$. Thus, if Proposer were to play fully randomly, Decider could respond with a strategy that saves a factor of $\Omega(\sqrt{\log n})$ from the optimum. Nevertheless, for Decider, a completely random strategy is optimal (up to a constant factor).

This paper is organized as follows. In the next section, we formally define the game and describe the Proposer and Decider strategies we will use to prove Theorem~\ref{thm:main}. In Section~\ref{sec:Decider-win}, we prove the upper bound in Theorem~\ref{thm:main} by studying the random Decider strategy. This argument is motivated by an old proof of Erd\H os~\cite{Er} that the off-diagonal Ramsey numbers satisfy $r(3,t) = \Omega(t^2/(\log t)^2)$, and relies on a concentration lemma of Conlon, Fox, He, and Grinshpun~\cite{CoFoGrHe}. This kind of argument has been generalized to prove lower bounds on all off-diagonal Ramsey numbers by Krivelevich~\cite{Kr}.

Finally, in Section~\ref{sec:Proposer-win}, we prove the lower bound in Theorem~\ref{thm:main}. This last argument is the most delicate, and requires two main ingredients: an appropriate ``semi-random'' strategy for Proposer and the analysis of an auxiliary game, whose upshot is that Azuma's inequality remains close to true, even when an adversary is allowed to weakly affect the outcomes taken by a sequence of random variables. 

In the concluding remarks, we mention some open problems and conjectures relating to this and other Ramsey games.

All logarithms are base 2 unless otherwise stated. For the sake of clarity of presentation, we systematically omit floor and ceiling signs whenever they are not crucial.

\section{Background and Winning Strategies}
Ramsey, Paper, Scissors is the following two-player graph-building game played on a fixed set $V$ of $n$ vertices. Each turn of the game yields a new graph $G_i=(V,E_i)$ on these vertices. The game is initialized with $G_0$ as the empty graph, and each $G_i$ will either be equal to $G_{i-1}$ or will add a single new edge to $G_{i-1}$. Throughout the game $G_i$ is required to be triangle-free. In order to ensure this, call a pair $\{x,y\} \in \binom V2 \setminus E_i$ \emph{closed} in $G_i$ if there is some $z \in V$ such that $\{x,z\}, \{y,z\} \in E_i$; this means that the pair $\{x,y\}$ cannot be added as an edge without introducing a triangle to the graph. Let $C_i$ denote the set of closed pairs in $G_i$. Call the pair $\{x,y\} \in \binom V2 \setminus E_i$ \emph{open} in $G_i$ if it is not closed in $G_i$, and let $O_i$ be the set of all open pairs in $G_i$. Thus, $\binom V2=E_i \sqcup O_i \sqcup C_i$. 

The game proceeds as follows. Initialize a ``forbidden'' set $F_0=\emptyset$; in general, $F_i$ will contain all pairs that Proposer has proposed up to turn $i$. On turn $i$, Proposer chooses a pair $\{x,y\} \in O_i \setminus F_i$. Decider \emph{does not} learn what pair Proposer has chosen, but must decide to answer either YES or NO. If Decider answers YES, then we add the edge $\{x,y\}$ to $G_{i}$, so that $E_{i+1}=E_{i} \cup \{xy\}$. Otherwise, if Decider answers NO, then $E_{i+1}=E_{i}$.  Finally, regardless of Decider's answer, the pair $\{x,y\}$ is added to the forbidden set, namely $F_{i+1}=F_i \cup \{xy\}$. Thus, Proposer can never propose the same pair twice. Both players can see the contents of $F_i$, so at this point Decider learns what pair was proposed. The game ends when Proposer has no legal moves left, on the turn $i$ where $O_i \subseteq F_i$. Proposer wins if the final graph thus produced contains an independent set of order $s$ (for some parameter $s$), and Decider wins otherwise.

We now sketch the strategies for Proposer and Decider which we will use to prove the two parts of Theorem~\ref{thm:main}. 
Decider's strategy is simply to say YES on each turn with probability $p=Cn^{-1/2}$ randomly and independently, for an appropriately chosen constant $C>0$. As a result, the final graph built will be a subgraph of the Erd\H{o}s--R\'{e}nyi random graph $G(n,p)$, where some edges may have been removed to make it triangle-free. We will show in Section~\ref{sec:Decider-win} that not too many such edges are removed from any set of a certain size, so the independence number at the end of the game will still be $\Theta(\sqrt{n}\log n)$.

As mentioned above, Proposer's strategy cannot be purely random, for otherwise Decider could respond by saying YES on every single turn. The resulting graph process will be exactly the triangle-free process, which would result in a final graph with an independence number of $\Theta(\sqrt{n \log n})$ w.h.p.\ \cite{Bohman}.

To achieve an independence number of $\Theta(\sqrt{n} \log n)$, Proposer will divide the game into $m$ stages (which we call epochs), where $m$ is extremely slowly growing (its growth is of order $\log ^* n$), and Proposer's strategy is to force Decider to progressively ratchet up the fraction of YES answers in each epoch. Roughly, this is done as follows, though the precise strategy is slightly more involved. Proposer divides the vertex set into $2m$ sets $U_i, V_i$, where $1\le i\le m$ of sizes approximately $|U_i|=|V_i| \approx n/2^{i+1}$. In epoch $i$, Proposer proposes all the pairs from $U_i$ to $V_j$ for $j\ge i$ and all the open pairs within $V_i$, mixing together these two kinds of pairs in a uniformly random order.

To see why Decider must answer YES more often with each epoch, note that in order to keep $V_i$ from containing a large independent set, Decider must answer YES with a certain density $p$. But then the number of edges built from $U_i$ to $V_{i+1}$ will have roughly the same density $p$, which will then close a large fraction of the pairs in $V_{i+1}$. Thus, on the next epoch, Decider must answer YES much more frequently in order to reach the same final edge density in $V_{i+1}$. After enough epochs, this becomes impossible. Proposer's strategy is slightly different from the one sketched above to simplify the analysis; it is formally laid out in Section \ref{sec:Proposer-win}, as are the details of this heuristic argument.

\section{Decider's winning strategy}\label{sec:Decider-win}
In this section, we show that Decider's random strategy wins w.h.p.\ when Proposer must build an independent set of order at least $1000\sqrt{n} \log n$.

\begin{thm}\label{thm:Decider-win}
 Decider has a randomized strategy such that w.h.p., no matter how Proposer plays, the final graph produced will contain no independent set of order at least $1000\sqrt{n}\log n$.
\end{thm}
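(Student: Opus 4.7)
The plan is to analyze the randomized Decider strategy sketched above: on each turn, Decider answers YES with probability $p = Cn^{-1/2}$ independently, for a small absolute constant $C$ (concretely, $C = 1/2$). Treating Proposer's strategy as deterministic (by conditioning on any auxiliary randomness Proposer uses), I fix any set $S \subset V$ with $|S| = s = 1000\sqrt{n}\log n$, and aim to show
\[
\pr[S \text{ is independent at the end}] \le \exp\!\big(-\Omega(\sqrt n\,\log^2 n)\big).
\]
Since $\binom{n}{s} \le \exp(O(\sqrt n \log^2 n))$, a union bound over all choices of $S$ then yields the theorem.

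The key deterministic observation is that the game only terminates when every open pair has been proposed, so at the end each pair of $\binom{S}{2}$ is one of: an edge, a closed pair (one having a common neighbor in the final graph), or a proposed-and-rejected pair. Writing $N_S$ for the total number of Proposer's proposals lying in $\binom{S}{2}$ and $f$ for the number of closed pairs of $\binom{S}{2}$ at the end, this yields $N_S \ge \binom{s}{2}-f$ deterministically. On the probabilistic side, let $Y_1,Y_2,\dots$ be Decider's i.i.d.\ Bernoulli$(p)$ coins and $A_i = \mathbf{1}[\text{turn-}i\text{ proposal lies in }\binom{S}{2}]$, which is $\mathcal{F}_{i-1}$-measurable. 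Then the process
\[
M_n \,:=\, (1-p)^{-\sum_{i\le n}A_i}\,\mathbf{1}[A_i Y_i = 0 \text{ for all } i \le n]
\]
is a nonnegative bounded martingale with $M_0 = 1$; optional stopping at the game's end $T$ yields $\E[(1-p)^{-N_S}\mathbf{1}_{\{S\text{ indep}\}}] = 1$. A standard slicing argument gives, for any threshold $N_0$,
\[
\pr[S\text{ indep}] \;\le\; (1-p)^{N_0} + \pr[N_S < N_0] \;\le\; (1-p)^{N_0} + \pr\!\left[f > \tbinom{s}{2}-N_0\right].
\]
Taking $N_0 = \binom{s}{2}/2$, the first term is $\exp(-p\binom{s}{2}/2) = \exp(-\Omega(\sqrt n \log^2 n))$, which is already of the right order.

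Everything thus reduces to controlling $\pr[f > \binom{s}{2}/2]$. For this I would use $f \le \sum_{z\in V}\binom{d_z(S)}{2}$ where $d_z(S) = |N(z)\cap S|$ in the final graph, and begin with a first-moment estimate. For any would-be cherry $\{xz,yz\}$ with $x,y\in S$, both being edges requires two distinct YES responses, each of conditional probability $p$, so $\pr[xz, yz \text{ both edges}] \le p^2$ and
\[
\E\!\left[\sum_z \binom{d_z(S)}{2}\right] \;\le\; n\binom{s}{2}p^2 \;=\; C^2\binom{s}{2} \;=\; \tfrac14\binom{s}{2},
\]
comfortably below the threshold $\binom{s}{2}/2$. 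The hard part will be upgrading this first-moment estimate to an exponential tail bound $\pr[f > \binom{s}{2}/2] \le \exp(-\Omega(\sqrt n \log^2 n))$ strong enough to beat the $\exp(O(\sqrt n \log^2 n))$ coming from the union bound. Proposer's adaptive strategy couples the edge indicators in a genuinely adversarial way --- flipping a single $Y_i$ can reroute Proposer's entire future play --- so a naive Azuma bound applied to the $Y_i$ sequence is far too lossy. This is precisely the step that calls for the concentration lemma of Conlon, Fox, He, and Grinshpun~\cite{CoFoGrHe}, which is tailored for sums aggregated from an adversarially-coupled random process; once it supplies the required tail bound, the three ingredients assemble and the union bound over $\binom{n}{s}$ sets finishes the proof.
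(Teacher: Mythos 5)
Your high-level plan is sound (fix $S$, bound its survival probability exponentially, union bound over $\binom{n}{s}$ sets), and the martingale/optional-stopping step is a correct and somewhat elegant alternative to what the paper does: it cleanly converts ``$S$ survives'' into the bound $(1-p)^{N_0} + \pr[N_S < N_0]$ without needing the paper's notion of a reachable subgraph. You've also correctly reduced the problem to controlling the number $f$ of closed pairs inside $S$.

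However, there is a genuine gap, and it is at the heart of the argument. You never prove $\pr\bigl[f > \tbinom{s}{2}/2\bigr] \le \exp(-\Omega(\sqrt n\log^2 n))$, and your diagnosis of what tool is needed is wrong: you describe the Conlon--Fox--Grinshpun--He lemma as ``tailored for sums aggregated from an adversarially-coupled random process'' and worry that flipping a single $Y_i$ reroutes Proposer's future play. The lemma the paper actually invokes (Lemma~\ref{lem:common-neighbors}, which is Lemma~18 of \cite{CoFoGrHe}) is a plain structural statement about $G(n,p)$ with no adversary anywhere in it. The missing idea is the coupling that makes such a lemma applicable: since each pair is proposed at most once and Decider's coins are i.i.d., Decider may equivalently sample $G \sim G(n,p)$ up front and answer YES to $\{x,y\}$ iff $\{x,y\}\in E(G)$; the final game graph $G'$ is then a subgraph of $G$, however Proposer plays. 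This makes the relevant quantity \emph{monotone}: on the event that $S$ is independent in $G'$, your $f$ is at most the number of pairs in $S$ with a common neighbor outside $S$ in $G$, a quantity of $G(n,p)$ that the cited lemma bounds by $t^2/10$ w.h.p.\ \emph{simultaneously for all $S$} (so you do not even need a per-set tail bound). Without this reduction your plan stalls exactly where you say it does; with it, the paper's proof in fact bypasses the martingale entirely, bounding $\pr[I(S)\mid X(S)]\le (1-p)^{\binom{t}{2}-t^2/10}$ by conditioning on the edges of $G$ leaving $S$ and observing that each of the $\ge\binom{t}{2}-t^2/10$ pairs in $S$ with no external common neighbor in $G$ must be a non-edge of $G$ for $S$ to have any hope of surviving.
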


We will require the following structural result about the Erd\H{o}s--R\'enyi random graph (see \cite[Lemma 18]{CoFoGrHe}). It shows that for suitable $p \approx n^{-1/2}$, when edges are removed from $G(n,p)$ until it is triangle-free, not many edges need to be removed from any given small subset. 

\begin{lem} \label{lem:common-neighbors}
Suppose $t$ is sufficiently large, $p = 20 (\log t) / t$, $n = 10^{-6} t^2 /(\log t)^2$,
and $G \sim G(n,p)$ is an Erd\H{o}s-R\'{e}nyi random graph. Then, w.h.p.\ there does not exist a set $S\subset V(G)$ of order $t$ such that more than $\frac{t^2}{10}$ pairs of vertices in $S$ have a common neighbor outside $S$. 
\end{lem}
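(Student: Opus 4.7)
The plan is a union bound over all $\binom{n}{t}$ choices of $S \subseteq V(G)$ of size $t$, noting that $\log \binom{n}{t} = O(t \log t)$. It suffices to prove that for any fixed $S$, the probability that more than $t^2/10$ pairs in $S$ share a common neighbor outside $S$ is $o(\binom{n}{t}^{-1})$.

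Fix $S$ of size $t$. For $v \notin S$, let $d_S(v) = |N(v) \cap S|$; the $d_S(v)$ are i.i.d.\ $\mathrm{Bin}(t,p)$ with mean $\mu = tp = 20 \log t$. Each pair in $S$ with a common neighbor outside $S$ is counted at least once in the cherry count
\[
Z_S := \sum_{v \notin S} \binom{d_S(v)}{2}.
\]
A direct calculation gives $\E[Z_S] = (n-t)\binom{t}{2}p^2 \le \tfrac{1}{2}nt^2p^2 = 2\cdot 10^{-4} t^2$, a factor of $500$ below the target $t^2/10$, so what remains is a strong tail bound on $Z_S$.

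To establish this, I would use a Chernoff-type estimate based on the moment generating function $M(\lambda) = \E[\exp(\lambda \binom{B}{2})]$ with $B \sim \mathrm{Bin}(t,p)$, giving $\pr[Z_S \ge t^2/10] \le e^{-\lambda t^2/10} M(\lambda)^{n-t}$. The challenge is that $\binom{B}{2}$ is typically of size $\Theta(\mu^2) = \Theta((\log t)^2)$ but can reach $\binom{t}{2}$ in the extreme tail $B \approx t$ (which happens with probability $\sim p^t$), so $M(\lambda)$ receives a substantial contribution from this extreme regime as $\lambda$ grows. Choosing $\lambda$ at the critical value beyond which $M(\lambda)$ blows up—roughly $\lambda = \Theta(\log t/t)$—balances the bulk and extreme-tail contributions; the specific constants $20$ in $p$ and $10^{-6}$ in $n$ are chosen so this calibration closes with room to spare over $\log \binom{n}{t}$.

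The main obstacle is precisely this MGF calibration. Elementary tools all fall short. Chebyshev gives only $O((\log t)/t^2)$, since $\var(\binom{B}{2}) = \Theta(\mu^3)$ for $B \sim \mathrm{Bin}(t,p)$. Vanilla Hoeffding is useless because each $\binom{d_S(v)}{2}$ can reach $\binom{t}{2}$. Naive truncation of $d_S(v)$ at $K = C \log t$ leaves a truncation error $\sim n t^{-C}$ that cannot be absorbed into the $\binom{n}{t}^{-1}$ budget for constant $C$. And the Gaussian Taylor expansion of $\log M(\lambda)$ near $\lambda = 0$ breaks down at the optimal $\lambda$ because the $B \approx t$ tail contributes non-negligibly. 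The resolution is to split $\E[e^{\lambda \binom{B}{2}}]$ into bulk ($B \le 2\mu$), moderate, and extreme ($B$ near $t$) regimes and balance them—the technical heart of the lemma, as carried out in the cited work.
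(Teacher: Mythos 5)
The paper does not actually prove this lemma: it is imported verbatim from \cite{CoFoGrHe} (their Lemma~18), so there is no in-paper argument to compare against. Your framing---a union bound over all $\binom{n}{t}$ sets $S$, reduction to the cherry count $Z_S=\sum_{v\notin S}\binom{d_S(v)}{2}$, and the computation $\E[Z_S]\approx 2\cdot 10^{-4}t^2$---is the natural one and matches the cited argument in outline. But the step you call ``MGF calibration'' is precisely where the proof lives, and the calibration you describe does not in fact close.

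Concretely, with $B\sim\mathrm{Bin}(t,p)$, $tp=20\log t$, the generating function $M(\lambda)=\E\bigl[e^{\lambda\binom{B}{2}}\bigr]$ already contains the $B=t$ term $p^{t}e^{\lambda\binom{t}{2}}$, and this term crosses $1$ once $\lambda\gtrsim 2(\log t)/t$. At $\lambda=2(\log t)/t$ the Chernoff exponent is only
\[
\lambda\cdot\frac{t^2}{10}=\frac{t\log t}{5},
\]
whereas the union bound requires beating $\log\binom{n}{t}\sim t\log t$. Pushing $\lambda$ above $2(\log t)/t$ makes the $B\approx t$ contribution to $M(\lambda)$ (and hence $M(\lambda)^{n-t}$) grow faster than $e^{-\lambda t^2/10}$ shrinks, so the bound $\pr[Z_S\ge t^2/10]\le e^{-\lambda t^2/10}M(\lambda)^{n-t}$ never reaches $o\bigl(\binom{n}{t}^{-1}\bigr)$. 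The constants $20$ and $10^{-6}$ adjust the second-order terms, but the leading-order shortfall of a constant factor ($\approx 5$) in the exponent persists as $t\to\infty$. So ``splitting $\E[e^{\lambda\binom{B}{2}}]$ into regimes'' cannot rescue this: $M(\lambda)$ is a single number, and once the extreme regime drives it above $e^{\Theta(t\log t)}$, the factor $M(\lambda)^{n-t}$ is catastrophic no matter how you account for it.

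The decomposition has to act on $Z_S$ (or on the bad event) rather than on the MGF of a single summand: truncate the $d_S(v)$, apply a bounded-differences/Bernstein estimate to the truncated sum, and control the excess contribution from high-degree vertices by a separate combinatorial or multi-scale union-bound argument. You correctly observe that a single truncation at $C\log t$ leaves an error that cannot be absorbed; that observation is an argument \emph{against} the plan working as stated, not a nuisance that the MGF approach sidesteps. As written, the proposal identifies the obstacles but establishes neither the tail estimate nor a decomposition that beats $e^{-t\log t}$; the crucial concentration step is missing, and the specific route you indicate appears to fall short by a constant factor in the exponent.
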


In order to apply this lemma, we will need to understand the space of Proposer's strategies when Decider plays randomly.

\begin{Def}
Let $G$ be an arbitrary graph. Define a spanning subgraph $H\subset G$ to be a {\it reachable subgraph} of $G$ if for every edge $(u,v)\in E(G)\setminus E(H)$ there exists $w\in V(G)$ for which $(u,w),(v,w)\in E(H)$. 
\end{Def}

That is, every edge of $G$ not in $H$ is the third edge of a triangle with the other two edges in $H$.

\begin{proof}[Proof of Theorem~\ref{thm:Decider-win}]
Suppose $n,t,p$ satisfy the conditions of Lemma~\ref{lem:common-neighbors}. Decider's strategy is simply to answer YES with probability $p$ on every turn. Unless a pair $\{x,y\}$ is closed, Proposer must choose it eventually, in which case it is added to the graph with probability $p$. Thus, we may as well pretend that Decider samples a random graph $G\sim G(n,p)$ at the beginning of the game and answers YES to a pair $\{x,y\}$ if and only if $x\sim y$ in $G$. The final graph produced is thus some subgraph $G'\subseteq G$ of the random graph $G$, obtained by removing certain edges from triangles. In fact, it is easily seen that the final graph $G'$ must be a reachable subgraph of $G$.

We will show that w.h.p., every reachable subgraph of $G\sim G(n,p)$ has independence number less than $t$. Since $t \le 1000 \sqrt{n}\log n$, this completes the proof.

For each $S\subset V(G)$ of order $t$, define $X(S)$ to be the event that not more than $t^2/10$ pairs of distinct vertices in $S$ have a common neighbor outside $S$. By Lemma~\ref{lem:common-neighbors}, w.h.p.~all the events $X(S)$ occur.  

Let $I(S)$ be the event that there exists a reachable subgraph $H\subseteq G$ in which $S$ is an independent set. Conditioning on $X(S)$, at most $t^2/10$ pairs of vertices in $S$ have common neighbors outside $S$. Note that if $G[S]$ contains an edge $(u,v)$ with no common neighbor outside $S$, then $S$ is not an independent set in any reachable subgraph $H\subset G$. This is because either $(u,v)\in E(H)$ or else the two edges that form a triangle with $(u,v)$ must both lie in $E(H)$.
In either case there is an edge in $H[S]$.

Thus, since there are at least $\binom{t}{2}-\frac{t^2}{10}$ pairs in $S$ chosen by Proposer,
\[
\pr[I(S) | X(S)] \le (1-p)^{\binom{t}{2}-\frac{t^2}{10}} \le (1-p)^{t^2/4}.
\]
We now compute
\[
\pr\left[\bigvee_S I(S)\right] \le \pr\left[\overline{\bigwedge_S X(S)}\right] + \pr\left[\bigwedge_S X(S) \wedge \bigvee_S I(S)\right].
\]

The first summand goes to zero by Lemma~\ref{lem:common-neighbors}. The second can be bounded by a union of events of low probability, as follows.
\begin{align*}
\pr\left[\bigwedge_S X(S) \wedge \bigvee_S I(S)\right]
 & \le \sum_S \pr\left[\bigwedge_{S'} X(S') \wedge I(S)\right] \\
 & \le \sum_S \pr[X(S)\wedge I(S)] \\
 & \le \sum_S \pr[I(S) | X(S)] \\
 & \le \binom{n}{t} (1-p)^{t^2/4} \\
 & \le e^{t\log n} \cdot e^{-pt^2/4} \\
 & \le e^{2t\log t} \cdot e^{-5t \log t}.
\end{align*}

It follows that w.h.p., none of the events $I(S)$ occur. That is, there is no reachable subgraph of $G\sim G(n,p)$, and Proposer cannot win.
\end{proof}

\section{Proposer's winning strategy}\label{sec:Proposer-win}

In the previous section, we showed that Decider can win w.h.p.\ when $s = \Omega(\sqrt n \log n)$. In this section, we show that Proposer can match this bound (up to the constant factor), by exhibiting a strategy that yields an independence number of $\Omega(\sqrt{n}\log n)$ w.h.p.

\begin{thm}\label{thm:Proposer-win-adaptive}
Proposer has a randomized strategy such that w.h.p., no matter how Decider plays, the final graph produced will contain an independent set of order at least $\frac{1}{1000} \sqrt n \log n$. 
\end{thm}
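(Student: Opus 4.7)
The plan is to implement the semi-random ``epoch'' strategy for Proposer sketched at the end of Section~2, and to rigorously justify the heuristic ratcheting argument that drives it. Fix $m = \Theta(\log^* n)$ epochs, and partition the vertex set into blocks $U_1, V_1, \ldots, U_m, V_m$ with $|U_i| \approx |V_i| \approx n / 2^{i+1}$. In epoch $i$, Proposer proposes, in a uniformly random order, all currently open and not-yet-forbidden pairs lying in $\binom{V_i}{2} \cup \bigcup_{j \geq i} (U_i \times V_j)$. At the end of epoch $m$, the hope is that some $V_i$ contains an independent set of size $s = \tfrac{1}{1000}\sqrt{n}\log n$ regardless of how Decider plays.

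The heart of the argument is a ratcheting recursion on the ``open density'' $q_i$ inside $V_i$ at the start of epoch $i$. To keep $V_i$ from containing an independent set of size $s$, Decider must answer YES with density at least some threshold $p_i = p_i(q_i)$ during epoch $i$; this threshold can be extracted from standard independence-number bounds for sparse triangle-free graphs arising as subprocesses of the triangle-free process. By the uniformly random ordering, the same density $p_i$ of YES answers is simultaneously applied to the pairs in $U_i \times V_{i+1}$. This gives each vertex of $V_{i+1}$ roughly $p_i |U_i|$ neighbors inside $U_i$, so each pair in $V_{i+1}$ is closed before epoch $i+1$ begins with probability approximately $1 - (1 - p_i^2)^{|U_i|}$. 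Thus $q_{i+1}$ drops sharply relative to $q_i$, forcing $p_{i+1}$ to be much larger than $p_i$; writing out the resulting recursion, I would show that $p_i$ grows tower-fast, so that after $O(\log^* n)$ epochs Decider would need $p_i > 1$ to avoid a large independent set in $V_i$, which is impossible.

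The main technical obstacle, and the reason the actual proof is more delicate than this heuristic, is that Decider is adaptive: Decider can observe the history of proposals so far and deliberately bias answers, for instance concentrating YES answers on $\binom{V_i}{2}$ while refusing the pairs in $U_i \times V_{i+1}$, thereby suppressing the closure of $V_{i+1}$ that drives the ratcheting. To rule this out, I would reduce the question to an auxiliary one-dimensional game: an adversary sees a stream of labels (here, the uniformly random interleaving of the two kinds of pairs) one at a time, must commit to a YES/NO response on each round, and the goal is to control the gap between YES-counts on different label classes. The claim is that even with adaptive adversarial play, the YES-counts on each class remain within an Azuma-type error of what the random ordering alone would predict. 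Proving this ``Azuma with a weak adversary'' inequality, essentially a martingale concentration statement robust to a bounded adversarial correction, is what I expect to be the main difficulty.

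With this concentration lemma in hand, applying it to the $O(\log^* n)$ epochs and the $O(1)$ pair-classes of interest in each epoch and taking a union bound promotes the deterministic ratcheting recursion to a high-probability statement. Routine bookkeeping handles the effect of the forbidden set $F_i$ and the closed set $C_i$ at each epoch boundary, as well as verifying that the partitioning and epoch sizes fit together consistently. Putting it all together yields that w.h.p.\ there exists some epoch $i$ at the end of which $V_i$, and hence the whole final graph, contains an independent set of size $s$, proving Theorem~\ref{thm:Proposer-win-adaptive}.
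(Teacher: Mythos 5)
Your proposal follows essentially the same approach as the paper: the epoch-based ratcheting strategy, the recursion on open density inside the $V_i$, the tower-function blowup over $\Theta(\log^* n)$ epochs, and crucially the identification of an ``adaptive Azuma'' concentration lemma as the technical crux. The paper formalizes that lemma as the Coins and Buckets game (Lemma~\ref{lem:bucket}), and it implements your epoch-level proposal at the finer granularity of $n/6$ \emph{periods} (one root vertex $u_i$ per period, padded to constant length $n/3$ with clean-up pairs from auxiliary blocks $A,B$) precisely so that each period can be cleanly modeled as one instance of that game. Two details in your sketch that require more care than you indicate: the union bound in the density-decrement step is over $\Theta(n)$ applications of the concentration lemma per period (one per vertex neighborhood plus a dyadic degree decomposition), not $O(1)$ per epoch; and you leave the ``Azuma with a weak adversary'' lemma itself as a stated difficulty rather than proving it, whereas it is the main new ingredient of the argument.
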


We now formally describe Proposer's strategy. In the next subsection, we introduce an auxiliary game and its analysis, which will be crucial to the analysis of this strategy, and then we prove that this strategy indeed wins w.h.p.\ in Section \ref{subsec:strategy-works}.

Proposer begins by partitioning the vertex set as $U \sqcup V \sqcup A \sqcup B$ with $|U|=|V|=n/6$ and $|A|=|B|=n/3$, and labeling their vertices $u_1,\ldots,u_{n/6},v_1,\ldots,v_{n/6},a_1,\ldots,a_{n/3},b_1,\ldots,b_{n/3}$, respectively (we can assume for simplicity that $n$ is divisible by $6$). Everything significant Proposer does will be between $U$ and $V$, while $A$ and $B$ will only be used for ``clean-up'' to simplify the analysis. 

Proposer divides the game into $n/6$ \emph{periods.} At the beginning of period $i$, Proposer first compiles a list $L_i$ of open pairs to propose in this period, and then orders $L_i$ uniformly at random. During the period, Proposer proposes the pairs in $L_i$ one at a time in this order. By the choice of $L_i$, regardless of how Decider plays during period $i$, none of the pairs in $L_i$ will be closed, so Proposer will be able to propose all of them regardless of the chosen order or of Decider's choices. 

Proposer's list $L_i$ consists of all the pairs $\{u_i,v_j\}$ for $j>i$, together with all the pairs $\{v_i,v_j\}$ with $j>i$ that are open at the beginning of period $i$. Additionally, Proposer ensures that each period has length $n/3$ by adding some number of pairs $\{a_\ell,b_{\ell'}\}$ to $L_i$ until $|L_i| = n/3$. The list $L_i$ is designed so that \emph{all} pairs in $L_i$ are open at the start of period $i$ and remain open until they are proposed. Moreover, since the induced subgraph on $A\cup B$ stays bipartite throughout this whole process, all of the ``clean-up'' pairs in $L_i$ will remain open as well, and since we made $A$ and $B$ sufficiently large, Proposer will always have enough pairs to propose to ensure that $|L_i| = n/3$ for all periods. Finally, as stated above, once Proposer has compiled $L_i$, it is ordered uniformly at random, and Proposer proposes the pairs in $L_i$ according to that order.

Once Proposer has done this for periods $1,2,\ldots,n/6$, there will be many pairs that are still open and that Proposer has not yet proposed. Proposer will propose these in an arbitrary order; no matter what happens at this ``endgame'' stage, Proposer will have already guaranteed a sufficiently large independent set inside $V$, which will remain independent throughout the remainder of the game. 

For the analysis, we will also want to group periods into \emph{epochs}. To do this, we will pick a sequence of decreasing positive constants $\varepsilon_1,\varepsilon_2,\ldots,\varepsilon_m$ summing to $1/6$, and declare the first epoch to consist of the first $\varepsilon_1 n$ periods, the second epoch to consist of the subsequent $\varepsilon_2 n$ periods, and so on. The number of epochs, $m$, will be a very slowly growing function of $n$. Let $I_k \subseteq \{1,\ldots,n/6\}$ denote the set of periods comprising epoch $k$, and let
\[
	U_k=\{u_i\}_{i \in I_k} \qquad \qquad V_k=\{v_i\}_{i \in I_k}
\]
denote the set of vertices in $U$ and $V$ that are the roots of those pairs considered in epoch $k$. 

In order to prove that this strategy works w.h.p.,\ we will need a probabilistic tool which we call the \textit{bucket lemma,} and which is stated and proved in the next section. It describes the behavior of a different game, which is designed to model one period of Ramsey, Paper, Scissors from Decider's perspective. The bucket lemma can be thought of as an adaptive concentration inequality akin to Azuma's inequality; it says that even if an adversary has certain weak control over a sequence of random variables, he can't make it stray too far from its mean w.h.p.

\subsection{Coins and Buckets}
We begin with a straightforward tight concentration lemma. To prove it, we will also need the following concentration lemma of Bohman \cite{Bohman}, which is a generalization of Azuma's inequality; it says that if we have a martingale whose differences are bounded by different amounts from above and below, then we get concentration of the same order as a martingale with differences bounded by the geometric mean of the bounds.

\begin{lem}[{\cite[Lemmas 6 \& 7]{Bohman}}]\label{lem:Bohman-concentration}
    Suppose $0=Z_0,Z_1,\ldots,Z_m$ is a martingale such that for all $i$,
    \[
        Z_i -c_1 \leq Z_{i+1} \leq Z_i+c_2,
    \]
    where $0<c_1 \leq c_2/10$ are constants. Let $0<\lambda<mc_1$. Then
    \[
        \pr(|Z_m| \geq \lambda) \leq 2e^{-\frac{\lambda^2}{3 c_1 c_2 m}}.
    \]
\end{lem}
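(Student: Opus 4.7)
The plan is to follow the standard exponential-moment (Chernoff) approach from the proof of Azuma's inequality, but to exploit the asymmetric range $[-c_1,c_2]$ to control the conditional MGF of each martingale difference more tightly than the usual symmetric bound allows. Writing $X_i := Z_i - Z_{i-1}$, the goal is to show that $\E[e^{tX_i}\mid \mathcal{F}_{i-1}] \le e^{\alpha t^2 c_1 c_2}$ for some absolute constant $\alpha$ and all $t$ in a suitable interval $0<t\le t_0$. The underlying heuristic is that a zero-mean random variable supported in $[-c_1,c_2]$ has variance at most $c_1 c_2$ (since $(X+c_1)(c_2-X)\ge 0$), so morally it behaves like a Gaussian with that variance, which would already yield concentration of the desired order.

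To get the MGF bound, I would begin with the convexity inequality
\[
e^{tx}\le \frac{c_2-x}{c_1+c_2}\,e^{-tc_1}+\frac{x+c_1}{c_1+c_2}\,e^{tc_2}\qquad (x\in[-c_1,c_2]),
\]
and take conditional expectations. Since $\E[X_i\mid \mathcal{F}_{i-1}] = 0$ this collapses to
\[
\E[e^{tX_i}\mid \mathcal{F}_{i-1}]\le \frac{c_2\, e^{-tc_1}+c_1\, e^{tc_2}}{c_1+c_2}.
\]
I would then Taylor-expand both exponentials: the constant terms combine to $1$, the linear terms cancel exactly, and the quadratic contribution works out to $\tfrac12 t^2 c_1 c_2$. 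The remaining challenge is to show that the higher-order tail is bounded by a small multiple of $t^2 c_1 c_2$ as well; this is where the hypothesis $c_1\le c_2/10$ comes in. Restricting to $t\le 1/c_2$, the factors $e^{-tc_1}$ and $e^{tc_2}$ are bounded by universal constants, and a termwise comparison of the series shows the $k$-th-order contribution is dominated by a constant times $(tc_2)^{k-2}\cdot t^2 c_1 c_2$, which sums geometrically.

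Once we have $\E[e^{tX_i}\mid \mathcal{F}_{i-1}]\le e^{\alpha t^2 c_1 c_2}$, iterating over $i$ gives $\E[e^{tZ_m}]\le e^{\alpha t^2 c_1 c_2 m}$, and Markov's inequality yields $\pr(Z_m\ge\lambda)\le e^{-t\lambda+\alpha t^2 c_1 c_2 m}$. Optimizing in $t$ by setting $t=\lambda/(2\alpha c_1 c_2 m)$ produces an upper-tail bound of $e^{-\lambda^2/(4\alpha c_1 c_2 m)}$; the hypothesis $\lambda < mc_1$ is precisely what keeps the optimal $t$ inside the admissible range $t\le 1/c_2$, since it yields $t\le 1/(2\alpha c_2)$. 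For the lower tail I would run the identical argument on $-Z_m$, whose differences lie in $[-c_2,c_1]$: the roles of $c_1$ and $c_2$ swap, but the product $c_1 c_2$ is symmetric, so the same bound results. A union bound over the two tails then gives the factor of $2$ in the conclusion.

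The main obstacle I expect is the non-asymptotic MGF estimate in the second paragraph. The constant $1/3$ in the exponent is not tight (the heuristic computation above would give roughly $1/4$), so there is comfortable slack, but writing out an inequality that is valid for all admissible $t$ rather than only in the $t\to 0$ limit requires keeping careful track of the higher-order terms in both series simultaneously. The condition $c_1\le c_2/10$ is essentially the quantitative statement that the rare $e^{tc_2}$ branch of the convex combination cannot overwhelm the common $e^{-tc_1}$ branch, and this is what enables the Gaussian-type concentration with variance proxy $c_1 c_2$ rather than the much weaker $c_2^2$ produced by naive Azuma.
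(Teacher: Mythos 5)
The paper does not prove this lemma; it cites it directly from Bohman's ``The triangle-free process'' (Lemmas 6 and 7 there), so there is no in-paper proof to compare against. Your outline follows the same method Bohman uses: bound the conditional MGF of each martingale increment by a Gaussian-type factor with variance proxy $c_1 c_2$, iterate, apply Markov, and optimize the exponential parameter. The key pieces are all correct: the convexity bound
\[
\E\bigl[e^{tX_i}\mid \mathcal{F}_{i-1}\bigr]\le \frac{c_2 e^{-tc_1}+c_1 e^{tc_2}}{c_1+c_2},
\]
the cancellation of the linear term and the $\tfrac12 t^2 c_1 c_2$ quadratic term, the observation that $\lambda< mc_1$ forces the optimal $t^*=\lambda/(2\alpha c_1 c_2 m)$ to satisfy $t^*c_2<1/(2\alpha)<1$, and the fact that the lower tail goes through with $c_1,c_2$ swapped (in fact more easily, since for increments in $[-c_2,c_1]$ the odd-order terms are negative). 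One small inaccuracy: the two tails are not ``the same bound'' by symmetry of $c_1c_2$---the higher-order Taylor coefficients differ---but both tails do satisfy the stated bound, which is all that matters for the union bound.

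The one place you are genuinely hand-waving is the claim that the tail of the series is a ``small multiple'' of $t^2 c_1 c_2$, where ``small'' must be quantified to land on the constant $3$. Carrying it out: for $k\ge 3$ the $k$-th Taylor coefficient of the MGF bound is at most $\frac{c_1 c_2^{k-1}(1+(c_1/c_2)^{k-1})}{k!}$, so the remainder $R$ satisfies $R\le (1+o(1))\tfrac{c_1}{c_2}\bigl(e^{u}-1-u-u^2/2\bigr)$ with $u=tc_2$. Choosing $\alpha$ so that $\tfrac12 u^2 + 2\bigl(e^u-1-u-\tfrac{u^2}{2}\bigr)\le\alpha u^2$ holds for all $u\le 1/(2\alpha)$ gives, e.g., $\alpha=0.7$, and then $4\alpha=2.8<3$, so the final bound $e^{-\lambda^2/(3c_1c_2m)}$ holds with room to spare. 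This is exactly where the hypothesis $c_1\le c_2/10$ (or even just $c_1\le c_2$) earns its keep, as you correctly identify: it prevents the even-order terms $c_2 c_1^k$ from doubling the constant. So the proposal is sound; it just needs the numerical check above to be made explicit.
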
 

Given integers $a,b>0$, define $X_{a,b}$ to be the following random variable with mean zero:
\begin{align*}
\Pr\left[X_{a,b} = \frac{1}{a}\right] & = \frac{a}{a+b} \\
\Pr\left[X_{a,b} = -\frac{1}{b}\right] & = \frac{b}{a+b}.
\end{align*}

\begin{lem}\label{lem:simple-concentration}
Suppose $a,b,\nu_0, \nu$ are positive integers with $a\le b$, $\nu = a + b$, and $\nu_0 \le \nu$, and $X_1, \ldots, X_\nu$ are $\nu$ independent random variables identically distributed as $X_{a,b}$. Let $S_m = \sum_{i\le m} X_i$. Then, for all $0 < t < \frac{a}{b}$,
\[
\Pr\left[\exists m \in [\nu_0, \nu]  \text{ \em such that } |S_m| \ge \frac{tm}{a}\right] \le \frac{40}{t^2} e^{-\frac{\nu_0 \nu t^2}{20a}}.
\]
\end{lem}
\begin{proof}
Fix an $m \ge \nu_0$, and notice that $S_0, S_1, \ldots, S_m$ is a martingale. The martingale $\{S_j\}$ is $(1/a)$-Lipschitz, so by Azuma's inequality, for all $t > 0$,
\begin{equation}\label{eq:azuma}
\Pr\left[|S_m| \ge \frac{tm}{a}\right] \le 2 e^{-m t^2}.
\end{equation}

We will use this bound when $a \geq b/10$. In the case that $a<b/10$, we use instead Lemma~\ref{lem:Bohman-concentration} with $\lambda = tm/a$, $c_1 = 1/b$, and $c_2 = 1/a$. The condition $\lambda < mc_1$ holds because we assumed $t< a/b$, so we have
\begin{equation}\label{eq:bohman-conc}
\Pr\left[|S_m| \ge \frac{tm}{a}\right] \le 2 e^{-\frac{mbt^2}{3a}}.
\end{equation}
Together with the fact that $b \ge \nu/2$, inequalities (\ref{eq:azuma}) and (\ref{eq:bohman-conc}) show that
\begin{equation}\label{eq:overall-conc}
\Pr\left[|S_m| \ge \frac{tm}{a}\right] \le 2 e^{-\frac{m\nu t^2}{20a}}
\end{equation}
for all $a\le b$.

In particular, applying the union bound over all $m \ge \nu_0$ to (\ref{eq:overall-conc}), we have that
\[
\Pr\left[\exists m \in [\nu_0, \nu]  \text{ such that\ } |S_m| \ge \frac{tm}{a}\right] \le \sum_{m \ge \nu_0} 2e^{-\frac{m\nu t^2}{20a}} \le \frac{2 e^{-\frac{\nu_0 \nu t^2}{20a}}}{1-e^{-\frac{\nu t^2}{20a}}}.
\]
Because $e^{-x} \le 1-x/2$ for $x \in [0,1]$, and because
\[
    \frac{\nu t^2}{20a} \leq \frac{(2b) (a/b)^2}{20a}=\frac{a}{10b}<1,
\]
we can bound $1-e^{-\frac{\nu t^2}{20a}} \ge \nu t^2/40a\ge t^2/20$ for all $0<t<a/b$. Thus,
\[
\Pr\left[\exists m \in [\nu_0, \nu]  \text{ such that\ } |S_m| \ge \frac{tm}{a}\right] \le \frac{40}{t^2} e^{-\frac{\nu_0 \nu t^2}{20a}},
\]
as desired.
\end{proof}

Consider the following game, which we call Coins and Buckets, and which we will use to model a single period of Ramsey, Paper, Scissors in which Proposer plays randomly. The game begins with two buckets $A,B$ of sizes $a,b$ respectively, and a total of $\nu=a+b$ coins to divide among them. Before the game starts, Proposer picks a random set $I$ from $\binom{[\nu]}{a}$ (but does not reveal the choice of the set to Decider). On step $i$ of the game, a coin is placed into one of two buckets: bucket $A$ if $i\in I$, and bucket $B$ otherwise. Before the coin is placed, Decider decides whether it is placed inside heads or tails, but Decider does not find out which bucket the coin enters until after the choice. Decider's goal is to make the distribution of heads in the buckets as uneven as possible. Namely, if we let $h$ be the total number of coins placed heads-up, and $h_A$ the number of coins in bucket $A$ that are heads-up, Decider wishes to maximize the error parameter $|h_A - ah/\nu|$, which we call his score.

We also pick a threshold $\nu_0$ such that we consider the game a forfeit if Decider picks heads fewer than $\nu_0$ times---that is, Decider receives a score of zero. Call this game the Coins and Buckets game with parameters $(a,\nu,\nu_0)$.

Under these conditions, we show that in Coins and Buckets, regardless of how Decider plays, w.h.p.\ the density of heads in the first bucket is not far from the overall density of heads.

\begin{lem}\label{lem:bucket}
Suppose Decider plays a game of Coins and Buckets with parameters $(a,\nu,\nu_0)$ where $a\le \nu/2$. If $h_A$ is the number of heads in the first bucket at the end of the game and $h$ is the total number of heads, then for any $0<t<a/\nu$,
\[
\Pr[(h \ge \nu_0) \wedge (|h_A - ah/\nu| \ge th)] \le \frac{80\sqrt{\nu}}{t^2}e^{-\frac{\nu_0\nu t^2}{20a}}.
\]
\begin{proof}
Of course we may assume that Decider chooses heads at least $\nu_0$ times. 

Let $b=\nu-a$. Instead of playing under the assumption that Proposer picks a fixed sequence out of $\binom{[\nu]}{a}$, we will simplify our analysis by considering the modification in which each coin is placed into the first bucket with probability $\frac{a}{\nu}$ independently; our first task is to show that this simplification is allowable, namely that a strong bound for this simplified model implies the desired bound in the original setting. To that end, let $E$ be the event that exactly $a$ of the coins fall in the first bucket in this new setting. 

The factorial function satisfies
\[
\sqrt{2\pi} \cdot \frac{\nu^{\nu+\frac{1}{2}}}{e^\nu} \le \nu! \le e\cdot \frac{\nu^{\nu+\frac{1}{2}}}{e^\nu}
\]
for all $\nu\ge 1$. Thus, we find that
\[
\Pr[E] = \binom{\nu}{a} \cdot\frac{a^ab^b}{\nu^\nu} \ge \frac{\sqrt{2\pi \nu}}{e^2\sqrt{ab}} \ge \frac{1}{2\sqrt{\nu}},
\]
where in the last step we used the AM-GM inequality in the form $\sqrt{ab} \le \nu/2$.

The original formulation of Coins and Buckets can be obtained by the modified version by conditioning on event $E$, and the probability of $E$ is at least $1/2\sqrt{\nu}$. Thus, it will suffice to show that in the modified setting,
\begin{equation}\label{eq:modified-goal}
\Pr\left[\left|h_A - \frac{a}{\nu}h\right| \ge th\right] \le \frac{40}{t^2}e^{-\frac{\nu_0 \nu t^2}{20a}},
\end{equation}
because conditioning on $E$ can multiply the failure probability by at most a factor of $\Pr[E]^{-1}$ (by Bayes' Theorem). We turn to proving (\ref{eq:modified-goal}) now.

Let $h_A^{(i)}$ and $h_B^{(i)}$ be the number of heads in buckets $A$ and $B$ respectively after $i$ turns of the game. Consider the sequence of random variables
\[
Z_i = \frac{1}{a}h_A^{(i)} - \frac{1}{b} h_B^{(i)}.
\]

On each of Decider's turns, there are two possibilities. If Decider picks tails, then $Z_i$ remains unchanged. On the other hand, if Decider picks heads, the coin goes into bucket $A$ with probability $a/(a+b)$, and bucket $B$ otherwise. Therefore, in this case, $Z_{i+1}-Z_i$ is distributed as $X_{a,b}$, the random variable defined for Lemma~\ref{lem:simple-concentration}.

We are ready to apply Lemma~\ref{lem:simple-concentration}. To see why, imagine that we ``pre-process'' some of the randomness in the game, as follows. Before the start of the game, we sample $\nu$ independent random variables $X_1, \ldots, X_\nu$ identically distributed as $X_{a,b}$. Keep a count $\ell$ of the total number of heads so far. On each turn that Decider picks tails, we place the coin into a bucket at random as before. However, on the turn Decider picks heads for the $\ell$th time, we place the coin into bucket $A$ if and only if $X_\ell = 1/a$. From Decider's perspective, this is the same game, since he doesn't know that the randomness was ``pre-processed''.

Under these assumptions, $Z_i$ will be equal to exactly $S_{h^{(i)}} = \sum_{j\le h^{(i)}} X_j$ where $h^{(i)}$ is the number of heads placed up through turn $i$. It follows that $Z_n=S_h$, where $h=h^{(\nu)}$ is the total number of heads placed. But then we can apply Lemma~\ref{lem:simple-concentration} to see that for $0<t<a/\nu<a/b$,
\begin{equation}\label{eq:random-heads}
\Pr[|Z_\nu| \ge th/a] \le \Pr\left[\exists m \in [\nu_0, \nu]  \text{ such that } |S_m| \ge \frac{tm}{a}\right] \le \frac{40}{t^2}e^{-\frac{\nu_0 \nu t^2}{20a}}.
\end{equation}
We now finish by noting that 
\[
|Z_\nu| = \left|\frac{1}{a} h_A - \frac{1}{b}h_B\right| \ge \left|\frac{1}{a} h_A - \frac{1}{\nu} h\right|,
\]
since $\frac 1 \nu h$ is a convex combination of $\frac 1a h_A$ and $\frac 1b h_B$, and so (\ref{eq:random-heads}) implies (\ref{eq:modified-goal}), as desired.
\end{proof}
\end{lem}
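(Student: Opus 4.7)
The plan is to reduce to an independent-Bernoulli model and then invoke Lemma~\ref{lem:simple-concentration}. First I would replace Proposer's uniformly random subset $I \in \binom{[\nu]}{a}$ specifying bucket~$A$'s positions by the product model in which each position independently goes to bucket $A$ with probability $a/\nu$. A Stirling estimate gives $\Pr[E] = \Theta(1/\sqrt{\nu})$ for the event $E$ that exactly $a$ positions land in $A$, so conditioning on $E$ inflates any failure probability by at most $O(\sqrt{\nu})$, which is precisely the $80\sqrt{\nu}$ factor in the target bound. It therefore suffices to prove the same concentration \emph{without} the $\sqrt{\nu}$ factor in the product model.

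In the product model I would track the mean-zero process $Z_i = h_A^{(i)}/a - h_B^{(i)}/b$: tails turns leave $Z_i$ unchanged, while on the $\ell$-th heads turn the increment has the distribution of $X_{a,b}$, independently of the history, because the bucket assignment is independent of Decider's past information. To handle Decider's adaptivity cleanly, I would pre-sample iid variables $X_1,\ldots,X_\nu \sim X_{a,b}$ at the start of the game and use $X_\ell$ to route the $\ell$-th heads coin. From Decider's view the game is unchanged, and one now has the exact identity $Z_\nu = S_h$, where $S_m = \sum_{i \le m} X_i$ and $h$ is the (random, adaptively chosen) total number of heads.

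A one-line algebraic computation gives $h_A - ah/\nu = (ab/\nu) Z_\nu$, so the event $|h_A - ah/\nu| \ge th$ forces $|S_h| \ge t\nu h/(ab) \ge th/a$, using $b \le \nu$. On the non-forfeit event $\{h \ge \nu_0\}$ this is contained in the maximal event $\{\exists m \in [\nu_0,\nu] : |S_m| \ge tm/a\}$, and Lemma~\ref{lem:simple-concentration} applies (noting that $t < a/\nu < a/b$), yielding a bound of $(40/t^2)\,e^{-\nu_0\nu t^2/(20a)}$. Multiplying by the $2\sqrt{\nu}$ loss from conditioning on $E$ delivers the stated inequality.

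The delicate point is decoupling Decider's adaptivity from the bucket randomness: Decider may call heads based on past outcomes, so the increments of $Z_i$ are not \emph{a priori} the terms of a clean random walk, and Decider effectively picks a stopping time $h$. The pre-processing device is what resolves this; once $X_1,\ldots,X_\nu$ are fixed in advance, the walk $S_m$ is a deterministic function of $m$, and all of Decider's adaptivity is channelled into the choice of the terminal index $h$. That is exactly why a \emph{uniform-in-$m$} maximal bound (Lemma~\ref{lem:simple-concentration}) is required, rather than a single-time Azuma bound at a fixed endpoint.
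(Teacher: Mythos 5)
Your proof is correct and follows essentially the same route as the paper's: reduce to the independent product model by conditioning on the event $E$ (accounting for the $\Theta(\sqrt{\nu})$ loss), track $Z_i = h_A^{(i)}/a - h_B^{(i)}/b$, pre-sample the $X_\ell$'s to decouple Decider's adaptivity from the bucket randomness, and apply the maximal inequality of Lemma~\ref{lem:simple-concentration} at the random stopping index $h$. The only cosmetic difference is in the final algebraic step: you compute the exact identity $h_A - ah/\nu = (ab/\nu)Z_\nu$ and then use $b \le \nu$, whereas the paper phrases the same inequality via the observation that $h/\nu$ is a convex combination of $h_A/a$ and $h_B/b$; these are equivalent.
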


\subsection{Proposer's strategy works}\label{subsec:strategy-works}

We are ready to show that the strategy described at the beginning of  Section~\ref{sec:Proposer-win} indeed allows Proposer to produce an independent set of order $\frac{1}{1000}\sqrt{n}\log n$ w.h.p.

Recall the setup: $G_i=(V,E_i)$ is the graph built on turn $i$. The vertices are labeled $u_i,v_i, a_\ell, b_{\ell}$ where $1\le i\le n/6$ and $1\le \ell \le n/3$. Period $i$ consists of a total of $n/3$ turns, during which Proposer proposes only pairs of the forms $\{u_i, v_j\}$, $\{v_i, v_j\}$, with $j > i$ and ``clean-up" pairs $\{a_\ell, b_{\ell'}\}$. Proposer will propose all pairs of the first form $\{u_i, v_j\}$, but only the open pairs among $\{v_i, v_j\}$, and orders the choices completely at random. Let us fix some strategy for Decider; our goal is to show that no matter what this strategy is, Proposer will win w.h.p. We define $p_i$ to be the fraction (out of $n/3$) of Decider's answers which are YES during period $i$; $p_i$ will depend on Decider's strategy, and potentially also on the random permutation chosen by Proposer, or more generally on the state of the game throughout period $i$.

Recall also that we organized the periods into {\it epochs} $I_k\subseteq\{1,\ldots, n/6\}$ of decreasing order $|I_k| = \varepsilon_k n$ satisfying $\sum \varepsilon_k = 1/6$. We will pick $\varepsilon_k$ to satisfy $\varepsilon_k \ge 1/\log n$ if $n$ is sufficiently large. The vertices of an epoch we called $U_k=\{u_i\}_{i\in I_k}$ and $V_k = \{v_i\}_{i\in I_k}$. We define $P_k$ to be the average of $p_i$ over all $i\in I_k$. Finally, we define $o_k$ to be the \textit{open density} inside $V_k$ at the end of epoch $k-1$; formally, if $i^*$ is the maximum $i \in I_{k-1}$, then we define
\[
    o_k=\frac{|O_{i^*} \cap \binom{V_k}2|}{\binom{|V_k|}2}.
\]

The main result that we need is the following theorem of Shearer\footnote{Shearer's bound is of the form $\alpha(G) \geq n f(d)$ with $f(d) =(1+o(1))\log d/d$, and one can check that $f(d) \geq 2\log d/(3 d)$ for all $d$.}~\cite{Sh1}, improving by a constant factor earlier work of Ajtai, Koml\'os, and Szemer\'edi~\cite{AjKoSz}. As usual, $\alpha(G)$ denotes the order of the largest independent set in $G$.
\begin{thm}\label{thm:aks}
	If $G$ is a triangle-free graph on $n$ vertices and with average degree $d$, then
	\[
		\alpha(G) \geq \frac{2n \log d}{ 3 d}.
	\]
\end{thm}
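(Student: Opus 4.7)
The plan is to prove Shearer's sharper bound $\alpha(G) \geq n f(d)$ for a specific convex decreasing function $f:[0,\infty) \to (0,1]$ with $f(d) = \Theta(\log d / d)$ as $d \to \infty$, and then deduce the theorem by verifying the elementary inequality $f(d) \geq 2 \log d / (3d)$. Because $f$ is convex and $d = \frac{1}{n}\sum_v d(v)$, Jensen's inequality reduces the task to the stronger vertex-by-vertex bound $\alpha(G) \geq \sum_v f(d(v))$, so it suffices to construct a random independent set $I \subseteq V(G)$ satisfying $\pr[v \in I] \geq f(d(v))$ for every vertex $v$.

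My first attempt would be the standard local-minimum construction: sample i.i.d.\ uniform labels $\lambda_v \in [0,1]$ and let $I$ consist of those $v$ with $\lambda_v < \lambda_u$ for all $u$ adjacent to $v$. This $I$ is automatically independent and $\pr[v \in I] = 1/(d(v)+1)$, recovering the Caro--Wei bound $\alpha(G) \geq n/(d+1)$. However, this estimate does not use triangle-freeness at all and is off by a factor of $\log d$. To extract that factor, I would refine the construction in two stages: first include each vertex in a random set $S$ with probability $p \approx \log d / d$, and then run the local-minimum procedure inside $G[S]$. The essential input is that triangle-freeness of $G$ makes the neighborhood of any vertex in $G[S]$ itself an independent set, which is what allows the two-step construction to beat Caro--Wei.

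The main obstacle is quantitatively turning this structural observation into a recursion satisfied by $f$. Conditioning on the random degree $|N_S(v)|$ of $v$ inside $G[S]$ and estimating the probability that $v$ is a local minimum in $G[S]$, one arrives, after optimizing in $p$, at a functional inequality pinning down Shearer's explicit $f(d) = (d\ln d - d + 1)/(d-1)^2$ (with $f(0)=1$ and $f(1)=1/2$ by continuous extension). This $f$ satisfies $f(d) \cdot d/\log d \to \ln 2 \approx 0.693$ as $d \to \infty$. Since $\ln 2 > 2/3$, the bound $f(d) \geq 2\log d/(3d)$ holds for all sufficiently large $d$; the remaining finite range of small $d$ is checked by direct computation (and for $d \leq 1$ the theorem is trivial). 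I expect the functional-inequality step to be the most delicate part, since any loose estimate inside the recursion risks losing the $\log d$ improvement entirely and collapsing back to the Caro--Wei bound.
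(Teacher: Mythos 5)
The paper itself does not prove this theorem: it invokes Shearer's result \cite{Sh1} as a black box, with a footnote noting that Shearer's bound $\alpha(G)\ge n f(d)$ implies the stated form. So your attempt is really an attempt to reprove Shearer's theorem, and the comparison point is Shearer's argument, which is an induction on $n$ (delete a suitably averaged vertex together with its neighborhood) exploiting convexity of $f$ as a function of the average degree; it is not a one-shot random-set construction and never needs the per-vertex guarantee you reduce to.

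The core of your proposal has a genuine gap. For the two-stage construction you describe (keep each vertex in $S$ independently with probability $p$, then take local minima of i.i.d.\ labels inside $G[S]$), one can compute exactly
\[
\pr[v\in I] \;=\; p\,\E\!\left[\frac{1}{1+\mathrm{Bin}(d(v),p)}\right] \;=\; \frac{1-(1-p)^{d(v)+1}}{d(v)+1} \;\le\; \frac{1}{d(v)+1},
\]
for every $p$, so this construction is never better than Caro--Wei; the bound depends only on $d(v)$, so triangle-freeness cannot enter this estimate at all. Your remark that $N(v)\cap S$ is an independent set is true but unused, and the sentence ``one arrives, after optimizing in $p$, at a functional inequality pinning down Shearer's explicit $f$'' is exactly the missing proof: to gain the $\log d$ factor one must use the independence of neighborhoods in a structurally different way (as in Shearer's induction, the hard-core/occupancy-fraction method, or the random greedy analysis, e.g.\ by comparing $|I|$ with the number of uncovered neighbors or conditioning the independent set outside $N[v]$). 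Note also that the statement you reduce to, a single random independent set with $\pr[v\in I]\ge f(d(v))$ for all $v$ simultaneously, is a ``local'' strengthening of Shearer's theorem that is considerably harder than the theorem itself. Finally, the numerical endgame is not the routine check you claim under the paper's base-$2$ convention: since $f(d)=(d\ln d-d+1)/(d-1)^2\approx(\ln d-1)/d$, one has $f(d)\,d/\log_2 d\approx(\ln 2)\bigl(1-\tfrac{1}{\ln d}\bigr)$, which stays below $2/3$ until $\ln d\approx 26$, so the pointwise inequality $f(d)\ge 2\log_2 d/(3d)$ fails for a huge range of moderate $d$; it does hold for all $d$ when $\log$ is read as the natural logarithm, which is evidently how the paper's footnote should be interpreted, but your ``check small $d$ directly'' step as written would not go through.
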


We first show, using Theorem \ref{thm:aks}, that if $P_k$ is too small for a given epoch, Proposer wins in this epoch immediately.

\begin{lem}\label{lem:pklarge-adaptive}
	Suppose that for some $k$, we have
	\[
		P_k \le  \frac{200}{ o_k \sqrt n}. 
	\]
	Then with probability at least $1-e^{-\Omega(\sqrt n/(\log n)^3)}$, Proposer will win the game; in fact, at the end of epoch $k$, $V_k$ will contain w.h.p.\ an independent set of order $\frac{1}{1000}\sqrt n \log n$, and since Proposer will have proposed or closed every pair in this independent set by the end of epoch $k$, it will remain independent until the end of the game. 
\end{lem}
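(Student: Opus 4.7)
The idea is to show that at the end of epoch $k$, the triangle-free induced subgraph $G[V_k]$ has average degree $O(\varepsilon_k\sqrt n)$, and then to invoke Shearer's theorem (Theorem~\ref{thm:aks}) to produce an independent set of order $\frac{1}{1000}\sqrt n\log n$ inside it. Preservation is immediate: by the end of epoch~$k$, every intra-$V_k$ pair has been either proposed or closed, so such an independent set will remain independent until the end of the game. Moreover, $G[V_k]$ is \emph{empty} at the start of epoch~$k$: each pair $\{v_i,v_j\}\subseteq V_k$ has $\min(i,j)\in I_k$, so is only considered during epoch~$k$. Thus it suffices to bound the number of intra-$V_k$ edges built during epoch~$k$.

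For each $i\in I_k$, let $a_i$ denote the number of pairs $\{v_i,v_j\}$ with $j>i$ and $j\in I_k$ that are open at the start of period~$i$; these are exactly the intra-$V_k$ pairs on Proposer's random list $L_i$. Since $L_i$ has size $\nu=n/3$ and is proposed in uniform random order, I would apply the bucket lemma (Lemma~\ref{lem:bucket}) to period~$i$ with bucket~$A$ equal to these $a_i$ pairs, and with $t$ and $\nu_0$ chosen so the per-period failure probability is $\exp(-\Omega(\sqrt n/(\log n)^3))$ (e.g., $t=\Theta(1/\log n)$ and $\nu_0=\Theta(\sqrt n/\log n)$). Union-bounding over the at most $n$ periods of epoch~$k$ would then yield, w.h.p., that
\[
|E(G[V_k])| \;\leq\; \sum_{i\in I_k}a_ip_i \;+\; o(\varepsilon_k^2 n^{3/2}).
\]

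The crucial estimate is then $\sum_{i\in I_k}a_ip_i = O(\varepsilon_k^2 n^{3/2})$. We have the deterministic constraints $\sum a_i\leq o_k\binom{|V_k|}{2}=\tfrac12 o_k\varepsilon_k^2 n^2$ (each intra-$V_k$ pair open at the start of epoch~$k$ is proposed at most once) and $\sum p_i=P_k|I_k|=P_k\varepsilon_k n$, together with the hypothesis $P_k\leq 200/(o_k\sqrt n)$. The naive product bound $\sum a_ip_i\leq P_k\sum a_i=O(\varepsilon_k^2 n^{3/2})$ would give exactly what we need, but it is not valid in general because an adaptive Decider can correlate a large $p_i$ with periods of large $a_i$. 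To control this correlation, I would show that $\max_i a_i = O(o_k|V_k|)$ holds w.h.p., so that even the worst-case rearrangement gives $\sum a_ip_i\leq (\max_i a_i)\sum p_i = O(o_k\varepsilon_k n\cdot P_k\varepsilon_k n)=O(\varepsilon_k^2 n^{3/2})$. This spread-out property of the open pairs in $V_k$ at the start of epoch~$k$ should follow from the randomness in Proposer's uniform shuffles during earlier periods, which scatter the common neighbors in $U_{k'}\cup V_{k'}$ (for $k'<k$) roughly evenly across the vertices of $V_k$.

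With the average degree $d$ of $G[V_k]$ then bounded by $O(\varepsilon_k\sqrt n)$, Shearer's theorem gives $\alpha(G[V_k])\geq\frac{2\varepsilon_k n\log d}{3d}\geq\frac{1}{1000}\sqrt n\log n$, using that $\log d=\Theta(\log n)$ since $\varepsilon_k\geq 1/\log n$ and $d=\Theta(\varepsilon_k\sqrt n)$. The main obstacle throughout is the estimate on $\max_i a_i$ above: this is the only place where the argument is not completely routine, and it is the step most sensitive to the details of Proposer's semi-random strategy and the interaction between epochs. Once that control is in hand, the bucket lemma, union bound, and Shearer's theorem chain together to deliver the claimed probability bound.
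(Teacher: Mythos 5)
Your overall skeleton (build up the edge count in $G[V_k]$, note $G[V_k]$ is empty at the start of epoch $k$, control it via the bucket lemma, then invoke Shearer) matches the paper. But the way you deploy the bucket lemma is genuinely different, and the difference is where your proposal has a real gap.

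The paper applies Lemma~\ref{lem:bucket} \emph{once, to the entire epoch}, taking bucket $A$ to be all intra-$V_k$ pairs proposed anywhere in epoch $k$ (padded up to size $a = o_k\varepsilon_k^2 n^2/2$), with $\nu = \varepsilon_k n^2/3$ and $\nu_0 = \varepsilon_k^2 n^{3/2}/3$. The output of that single application directly bounds $h_A$ in terms of $(a/\nu)\cdot h$, where $h$ is the \emph{total} number of YES answers in the epoch; this yields $h_A \lesssim P_k o_k \varepsilon_k^2 n^2$ with no reference to the per-period counts $a_i$. Crucially, to make $\nu_0$ a legitimate lower bound on the total number of heads, the paper first disposes of three easy regimes: average degree at most $3$ in $V_k$ (Tur\'an), $o_k \leq 1/n$ (impossible given that), and $P_k < \varepsilon_k/\sqrt n$ (Shearer wins outright); only then can it assume Decider's YES rate is at least $\varepsilon_k/\sqrt n$ throughout the epoch. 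You proposal does not build in any of these preliminary case distinctions, which you need to justify a useful $\nu_0$.

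More importantly, your per-period decomposition forces you to bound $\sum_i a_i p_i$, and you correctly observe that the naive product bound fails because Decider can correlate large $p_i$ with large $a_i$. Your proposed fix, $\max_i a_i = O(o_k|V_k|)$, is an unproved claim, and it is precisely the claim the paper's argument is designed to avoid needing. There is no reason it should hold: $a_i$ is the open degree of $v_i$ inside $V_k$ at the start of period $i$, and there is no mechanism forcing those open degrees to be uniformly small. Decider's YES answers in earlier epochs are adversarial, and the closure pattern they induce on $\binom{V_k}{2}$ need not be anything close to regular; a small set of $v_i$'s could easily retain open degree $\Theta(|V_k|)$ while the average open degree $o_k|V_k|$ is much smaller. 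The ``scattering'' intuition from Proposer's shuffles does not obviously rule this out, since the shuffles randomize \emph{within} a list $L_i$ but the YES/NO pattern (and hence which pairs get closed) is still Decider's to choose. You flag this yourself as ``the main obstacle''; I'd go further and say it is likely false in general, so this step cannot simply be deferred. There is also a secondary issue: when $o_k$ is small, $a_i$ can be much less than $n/(3\log n)$, so your choice $t = \Theta(1/\log n)$ violates the constraint $t < a/\nu$ in Lemma~\ref{lem:bucket}; you would need to pad bucket $A$ in each period, which changes the bookkeeping. The epoch-wide application in the paper sidesteps both of these problems simultaneously.
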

\begin{proof}
First, suppose that at the end of epoch $k$, the average degree inside $V_k$ is at most $3$. By Tur\'an's Theorem, a graph on $\varepsilon_k n$ vertices with average degree $d$ has independence number at least $\varepsilon_k n/(d+1)$, so $V_k$ will contain an independent set of size at least
\[
    \frac{\varepsilon_k n}{4} \geq \frac{n}{4\log n} \geq \frac{1}{1000} \sqrt n \log n,
\]
for $n$ sufficiently large. Thus, from now on, we may assume that the average degree inside $V_k$ is more than $3$ at the end of epoch $k$.

Next, suppose that $o_k \leq 1/n$. This implies that at most $\binom{|V_k|}2/n$ edges can be built inside $V_k$, so the average degree in $V_k$ is at most $|V_k|/n=\varepsilon_k<3$. This contradicts the above assumption, so we may assume that $o_k>1/n$.

Next, we show that if $P_k < \varepsilon_k/\sqrt{n}$, then Proposer is guaranteed to win in epoch $k$. Indeed, the total number of edges built in the entire epoch is $P_k \cdot (n/3)\cdot (\varepsilon_k n) = P_k \varepsilon_k n^2/3$. In particular, at most this many edges are built inside $V_k$. It follows that the average degree in $V_k$ at the end of the game is between $3$ and $P_k n/3$, so by Theorem~\ref{thm:aks}, $V_k$ will contain an independent set of order at least
\[
\frac{2|V_k|\log(P_k n/3)}{3(P_k n/3)}=\frac{2\varepsilon_k \log (P_k n/3)}{P_k},
\]
where we use the fact that the function $\log d/d$ is monotonically decreasing for $d>3$. If $P_k < \varepsilon_k / \sqrt{n}$, then this quantity is at least $\frac {1}{1000} \sqrt{n} \log n$ for $n$ sufficiently large, and Proposer wins.

Now, using this fact we may assume that Decider chooses YES at least an $\varepsilon_k/\sqrt{n}$ fraction of the time throughout the period $k$. We can now apply Lemma~\ref{lem:bucket} to obtain a stronger bound on the total number of edges built in $V_k$. Namely, let $A$ be the set of all pairs within $V_k$ that are proposed in epoch $k$. Since the average open density at the beginning of the epoch is $o_k$ and only open pairs can be proposed, $|A|\le o_k \binom{|V_k|}2 \leq o_k \varepsilon_k^2 n^2/2$. If $A$ is smaller, then add pairs to $A$ until $|A|=o_k \varepsilon_k^2 n^2/2$. Also, a total number of $\varepsilon_k n^2/3$ turns occur in the epoch, and we know from the preceding argument that Decider answers YES at least $\varepsilon_k/\sqrt{n}$ of the time, so we can take $\nu_0 = \varepsilon_k ^2 n^{3/2}/3$ as a lower bound on the number of YES answers Decider gives.

Thus, we can think of epoch $k$ as an instance of the Coins and Buckets game with parameters
\[
(a,\nu, \nu_0) = \left(\frac{o_k \varepsilon_k^2 n^2}2, \frac{\varepsilon_k n^2}3, \frac{\varepsilon_k ^2 n^{3/2}}3\right).
\]
Here we think of the set $A$ of pairs as bucket $A$ in the Coins and Buckets game, and the condition $a < \nu/2$ is satisfied because $\varepsilon_k < \frac{1}{6}$ and $o_k \leq 1$.

Each pair in $A$ proposed corresponds to a coin placed in the bucket $A$, each pair proposed outside $A$ corresponds to a coin in the bucket $B$, and Decider's answers of heads/tails correspond to YES/NO.

We now apply Lemma~\ref{lem:bucket} with $t = a/2\nu=3o_k \varepsilon_k/4$ to this game. By our choice of $\nu_0$, we know that $h \geq \nu_0$ always, so we find that
\begin{align*}
    \pr \left(h_A \geq \left(\frac{a}{\nu}+t\right)h\right) &\leq \frac{80\sqrt \nu}{t^2} e^{-\frac{\nu_0 \nu t^2}{20a}}\\
    &=\frac{80\sqrt{\varepsilon_k n^2/3}}{(3o_k \varepsilon_k/4)^2} e^{-\frac{o_k \varepsilon_k^3 n^{3/2}}{160}}\\
    &\leq \frac{80 n^3 \sqrt{\varepsilon_k/3}}{9 \varepsilon_k^2/16} e^{-\varepsilon_k^3 \sqrt n/160}\\
    &\leq e^{-\Omega(\sqrt n/(\log n)^3)},
\end{align*}
where the second inequality follows from our assumption that $o_k \geq 1/n$, and the final inequality uses the fact that $\varepsilon_k \geq 1/\log n$ for $n$ sufficiently large. 

Therefore, we find that the number of edges actually in $V_k$ is bounded above by
\[
h_{A} \leq \left(\frac a\nu +t\right)h= \left(1 + \frac{1}{2}\right) P_k o_k \varepsilon_k ^2 n^2/2,
\]
with probability at least $1-e^{-\Omega(\sqrt n/(\log n)^3)}$. 

Thus the average degree in $V_k$ at the end of the game will be at most $\frac 32 P_ko_k\varepsilon_k n$. By Theorem~\ref{thm:aks} again, $V_k$ must contain an independent set of order at least
\[
    \frac{|V_k| \log (\frac 32 P_k o_k \varepsilon_k n)}{\frac 94 P_k o_k \varepsilon_k n} \geq \frac{(\varepsilon_k n) \log (\frac 32 (200/\sqrt n) \varepsilon_k n)}{\frac 94 (200/\sqrt n) \varepsilon_k n}\geq \frac{\sqrt n}{450} \log (300 \sqrt n/\log n) \geq \frac{1}{1000} \sqrt n \log n,
\]
where we used the bounds $P_k o_k \leq 200/\sqrt n$ and $\varepsilon_k \geq 1/\log n$. Thus, in this regime of $P_k$, Proposer wins with probability at least $1-e^{-\Omega(\sqrt n/(\log n)^3)}$.
\end{proof}

This lemma implies that for Decider to have a reasonable hope of winning, Decider must always ensure that $P_k \geq 200/(o_k \sqrt n)$. Intuitively, a large $P_k$ implies that many edges will be built between $U_k$ and $V_{k+1}$, which means that many pairs in $V_{k+1}$ will be closed off, so $o_{k+1}$ must be considerably smaller than $o_k$. Thus, to ensure $P_{k+1} \geq 200/(o_{k+1}\sqrt n)$, $P_{k+1}$ must be considerably larger than $P_k$, and so on. This cannot be sustained for long because $P_k$ must always be bounded by $1$, so eventually Decider will run out of room and lose the game. The next two lemmas makes this rigorous. We will need further notation; define $o_{k,i}$ to be the fraction of pairs in $\binom{V_k}2$ that are open at the end of period $i$.

\begin{lem} \label{lem:open-shrinking}
Suppose that $k\ge 2$, $i\in I_{k-1}$, and $p_i, o_{k,i-1}$ satisfy $p_i \ge P_{k-1}/2$ and
\begin{equation}\label{eq:density-needed}
o_{k,i-1} \ge \frac{3(\log n)^3}{P_{k-1} \varepsilon_k n},
\end{equation}
then with probability at least $1-e^{-\Omega(\log n)^2}$,
\[
o_{k,i} \le \left(1-\frac{1}{16}p_i ^2\right)o_{k,i-1}.
\]
\end{lem}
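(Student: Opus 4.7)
The plan is to argue that the new edges emanating from $u_i$ during period $i$ alone close enough open pairs within $V_k$. Let $H$ be the graph on $V_k$ whose edges are precisely the pairs open at the end of period $i-1$; by hypothesis~(\ref{eq:density-needed}), the average degree of $H$ is at least $3(\log n)^3/P_{k-1}$. Let $T_u \subseteq V_k$ denote the set of $v_j$ such that Decider answers YES to $\{u_i, v_j\}$ during period $i$. Every pair of vertices in $T_u$ that was previously open gets closed by the new common neighbor $u_i$, so
\[
o_{k,i}\binom{|V_k|}{2} \leq o_{k,i-1}\binom{|V_k|}{2} - |E(H[T_u])|,
\]
and it suffices to show $|E(H[T_u])| \geq p_i^2|E(H)|/16$ with probability $1 - e^{-\Omega((\log n)^2)}$.

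I will apply Lemma~\ref{lem:bucket} twice, viewing period $i$ as a Coins and Buckets game with $\nu = n/3$, heads count $h = p_i n/3$, and threshold $\nu_0 = P_{k-1}n/6 \leq h$. First, for each $v_j \in V_k$, the bucket $A_{v_j} = \{u_i\} \times N_H(v_j)$ has size $d_H(v_j)$, and taking $t = d_H(v_j)/(2\nu)$ yields $|N_H(v_j) \cap T_u| \geq p_i d_H(v_j)/2$ with failure probability at most $\mathrm{poly}(n) \cdot e^{-P_{k-1} d_H(v_j)/160}$. Setting
\[
V^{\mathrm{high}} = \{v_j \in V_k : d_H(v_j) \geq (\log n)^2/P_{k-1}\},
\]
this probability is $e^{-\Omega((\log n)^2)}$ for each $v_j \in V^{\mathrm{high}}$, and a union bound over $|V^{\mathrm{high}}| \leq n$ preserves the bound. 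Second, I partition $V^{\mathrm{high}}$ dyadically into classes $W_\ell = \{v_j \in V^{\mathrm{high}} : 2^\ell \leq d_H(v_j) < 2^{\ell+1}\}$; for each $\ell$ in $\mathcal{L} := \{\ell : |W_\ell| \geq 160(\log n)^2/P_{k-1}\}$, applying Lemma~\ref{lem:bucket} with bucket $A = \{u_i\} \times W_\ell$ gives $|T_u \cap W_\ell| \geq p_i|W_\ell|/2$, and a union bound over the $O(\log n)$ dyadic classes retains the $e^{-\Omega((\log n)^2)}$ overall bound.

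A direct calculation using hypothesis~(\ref{eq:density-needed}) shows that the two pieces being discarded---vertices in $V_k \setminus V^{\mathrm{high}}$ and dyadic classes $\ell \notin \mathcal{L}$---each carry only an $O(1/\log n)$ fraction of the total $\sum_{v_j} d_H(v_j) = 2|E(H)|$. On the event that both bucket applications succeed, I can chain them together as
\[
2|E(H[T_u])| = \sum_{v_j \in T_u} |N_H(v_j) \cap T_u| \geq \frac{p_i}{2}\sum_{\ell \in \mathcal{L}}|T_u \cap W_\ell|\cdot 2^\ell \geq \frac{p_i^2}{4}(1 - o(1))|E(H)|,
\]
where the first inequality applies the per-vertex bound restricted to $v_j \in T_u \cap V^{\mathrm{high}}$ together with the inequality $d_H(v_j) \geq 2^\ell$ for $v_j \in W_\ell$, and the second uses the per-class bound together with $\sum_{\ell \in \mathcal{L}}|W_\ell|\cdot 2^\ell = (1 - o(1))|E(H)|$. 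For $n$ sufficiently large this gives $|E(H[T_u])| \geq p_i^2|E(H)|/16$, as needed.

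The main obstacle is Decider's adaptivity: since Decider sees the labels of YES'd pairs, we cannot treat $T_u$ as a uniformly random subset of $V_k$ of its realized size, and Decider could in principle try to bias $T_u$ against high-$H$-degree vertices in order to keep $|E(H[T_u])|$ small. Lemma~\ref{lem:bucket} precisely neutralizes this bias for any single bucket, but the quantity $|E(H[T_u])|$ is not the $h_A$ of any single Coins and Buckets game; the dyadic decomposition is the cleanest route I see to simultaneously control both the size of $T_u \cap W_\ell$ in each dyadic class and the local density $|N_H(v_j) \cap T_u|$ around each high-degree $v_j$, and the $O(1/\log n)$ slack that this decomposition costs is exactly what hypothesis~(\ref{eq:density-needed}) provides.
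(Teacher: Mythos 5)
Your approach is essentially the same as the paper's: express $2\lvert E(H[T_u])\rvert$ via the handshake lemma, partition $V_k$ by a dyadic degree decomposition with a low-degree cutoff around $(\log n)^2 / P_{k-1}$, and apply the bucket lemma (Lemma~\ref{lem:bucket}) twice --- once per high-degree vertex $v_j$ to control $\lvert N_H(v_j) \cap T_u\rvert$, and once per dyadic class to control $\lvert T_u \cap W_\ell\rvert$ --- then chain the two estimates and use hypothesis~(\ref{eq:density-needed}) to show the discarded mass is a small fraction of $2\lvert E(H)\rvert$. The paper proves the identical statement with the same decomposition and the same two types of bucket applications.

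One place where your argument is actually \emph{tighter} than the paper's, and where you should be careful to spell out the ``direct calculation'': to show the non-$\mathcal{L}$ dyadic classes contribute an $O(1/\log n)$ fraction, you need to bound $\sum_{\ell \notin \mathcal{L}} \lvert W_\ell\rvert \cdot 2^{\ell+1}$ using the geometric decay in $\ell$ (so that $\sum_\ell 2^{\ell+1} = O(\varepsilon_k n)$, giving a total contribution of $O\bigl((\log n)^2 \varepsilon_k n / P_{k-1}\bigr)$). If instead you bound each class crudely by $\lvert W_\ell\rvert \cdot \varepsilon_k n$ and multiply by the $O(\log n)$ classes --- which is what the paper does --- then the contribution is only $O\bigl((\log n)^3 \varepsilon_k n / P_{k-1}\bigr)$, i.e.\ up to \emph{half} of $2\lvert E(H)\rvert$, not $o(1)$ of it. With your threshold $160 (\log n)^2/P_{k-1}$ the crude bound is in fact vacuous (it exceeds $2\lvert E(H)\rvert$), so the geometric-series step is load-bearing. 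It does go through, and in fact yields a constant of $1/8 - o(1)$ rather than the paper's exactly-tight $1/16$, but the claim deserves to be made explicit since the conclusion would be false without it.
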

\begin{proof}
We set up the conditions for Coins and Buckets to apply Lemma \ref{lem:bucket}. Suppose $L_i$ is the set of all pairs Proposer proposes during period $i$, consisting of all the pairs $\{u_i,v_j\}$ with $j>i$, all the open pairs $\{v_i, v_j\}$ with $j>i$, and some filler pairs $\{a_\ell,b_{\ell'}\}$ to make at total of $n/3$ turns. Suppose $A\subseteq L_i$ has order $(\log n)^2/P_{k-1} \leq |A|\le |L_i|/2$. The total number of YES responses given in this period is $p_i |L_i| \ge P_{k-1}|L_i|/2$, so we can define $\nu_0 = P_{k-1}|L_i|/2$. Thus we can think of this period as a game of Coins and Buckets with parameters $(a, \nu, \nu_0) = (|A|, |L_i|, P_{k-1}|L_i|/2)$.

Then, by Lemma~\ref{lem:bucket}, the density of YES responses made by Decider to pairs in $A$ will be close to the density of YES responses made by Decider to all pairs in $A_i$. Specifically, taking
\[
t = \frac{a}{2\nu}=\frac{3|A|}{2n}
\]
and writing $y_A$ for the number of YES answers given to edges in $A$, we have
\begin{equation}\label{eq:bucket}
y_A = \left(1\pm \frac 12\right) p_i |A|,
\end{equation}
with probability at least 
\begin{align*}
    1- \frac{80\sqrt \nu}{t^2}e^{-\frac{\nu_0 \nu t^2}{20a}}&=1-\frac{80\sqrt{n/3}}{(3|A|/2n)^2} e ^{-\frac{(P_{k-1}n/6)|A|}{80n/3}}\\
    &\geq 1- 100 n^{5/2} e^{-(\log n)^2/160}\\
    &=1-e^{-\Omega(\log n)^2},
\end{align*}
by Lemma \ref{lem:bucket}. In the inequality, we use the trivial bound $|A| \geq 1$ and our assumption that $|A| \geq (\log n)^2/P_{k-1}$. We will apply bound (\ref{eq:bucket}) a total of $O(n)$ times, so by the union bound, all of them will hold with probability at least $1-e^{-\Omega(\log n)^2}$.

Now, we count the number of open pairs in $V_k$ before and after period $i$. Thinking of the set $O_i$ of open pairs after turn $i$ as itself a graph on $V$, let $O_{k,i} = O_i[V_k]$ be the induced subgraph of $O_i$ consisting of open pairs in $V_k$ after period $i$. The pairs of $O_{k,i-1}$ that are closed off after period $i$ are exactly those pairs $\{v_j, v_{j'}\}$ for which Decider answers YES to both $\{u_i, v_j \}$ and $\{u_i, v_{j'}\}$, or else to both $\{v_i,v_j\}$ and $\{v_i,v_{j'}\}$. In particular, to lower-bound the number of pairs that are closed off in period $i$, it suffices to lower-bound the number of $j,j'$ for which Decider answers YES to both $\{u_i,v_j\}$ and $\{u_i,v_{j'}\}$. 

We write $N_{O_{k,i-1}}(v_j)$ for the neighborhood of $v_j$ in $O_{k,i-1}$, and $d_{k,i-1}(v_j)$ for the order of this neighborhood. We will apply inequality (\ref{eq:bucket}) to the set $A(v_j)=\{u_i\} \times N_{O_{k,i-1}}(v_j)$ consisting of all pairs from $u_i$ to $N_{O_{k,i-1}}(v_j)$. We can only apply it to those $v_j$ with
\[
    |A(v_j)|=d_{{k,i-1}}(v_j) \geq \frac{(\log n)^2}{P_{k-1}};
\]
note that the inequality $|A(v_j)| \leq |L_i|/2$ holds for all $v_j$, since at most half the pairs in $L_i$ are between $u_i$ and $V_k$, and in particular at most half are in $A(v_j)$. Then for those $v_j$ with $d_{{k,i-1}}(v_j) \geq (\log n)^2/P_{k-1}$, inequality (\ref{eq:bucket}) tells us that with probability at least $1-e^{-\Omega(\log n)^2}$, the number of neighbors $j' \in N_{O_{k,i}}(v_j)$ for which Decider answers YES to $\{u_i, v_{j'}\}$ is $(1 \pm \frac 12)p_i d_{k,i-1}(v_j)$. We will write $y_{N_{O_{k,i-1}}(v_j)}$ for this number of YES answers, which is a shorthand for $y_{\{u_i\} \times N_{O_{k,i-1}}(v_j)}$. 

We also divide the vertices $v_j \in V_k$ into dyadic intervals based on degree. That is, let $D_{\ell}$ be the set of all vertices $v_j\in V_k$ with $d_{k,i-1}(v_j)$ lying in $[2^{\ell-1}, 2^{\ell})$, where $1\le \ell \le \log_2 n$. Applying inequality (\ref{eq:bucket}) again, to the set $\{u_i\} \times D_\ell$, we have that if $|D_\ell| \geq (\log n)^2/P_{k-1}$, then $y_{D_\ell} = (1 \pm \frac 12) p_i |D_\ell|$ with probability at least $1-e^{-\Omega(\log n)^2}$. Here we again write $y_{D_\ell}$ as a shorthand for $y_{\{u_i\} \times D_\ell}$, the number of YES answers made to pairs of the form $\{u_i, v_j\}$ where $v_j \in D_\ell$. 

Throughout, we applied inequality (\ref{eq:bucket}) a total of $O(n)$ times, so everything still holds with probability $1-e^{-\Omega(\log n)^2}$. Putting all this together, we can lower bound the number of open pairs that are closed off in period $i$, by summing over $\ell$. For each $\ell \geq 1+\log ((\log n)^2/P_{k-1})$, we have that every $v_j \in D_\ell$ has degree at least $2^{\ell-1} \geq (\log n)^2/P_{k-1}$, so we can apply our above lower bound for $y_{N_{O_{k,i-1}}(v_j)}$. Suppose additionally that $\ell$ is such that $|D_\ell| \geq (\log n)^2/P_{k-1}$, so that we also have a bound for $y_{D_\ell}$. Then we find that for such an $\ell$, the number of pairs $\{v_j, v_{j'}\}$ closed off with $v_j \in D_\ell$ is at least
\[
y_{D_\ell} \cdot \min_{v_j \in D_\ell} y_{N_{O_{k,i-1}}(v_j)} \ge \left( \frac 12 p_i |D_\ell|\right) \left( \frac 12 p_i \min_{v_j \in D_\ell} d_{k,i-1}(v_j)\right) \geq \frac 14 p_i^2 |D_\ell| 2^{\ell-1}.
\]
We now sum this up over all $\ell$ as above, namely all $\ell \geq 1+\log ((\log n)^2/P_{k-1})$ with $|D_\ell| \geq (\log n)^2/P_{k-1}$; call such values of $\ell$ \emph{good}. Doing so gives us a lower bound on the total number of open pairs in $V_k$ closed off during period $i$; note that when we sum up, we might double-count pairs $\{v_j,v_{j'}\}$ if $v_j$ and $v_{j'}$ are in different parts of the dyadic partition $\{D_\ell\}$. Thus, we need to divide by $2$ when summing, and find that the total number of open pairs in $V_k$ that are closed during period $i$ is at least
\begin{align}
    \frac{p_i^2}{8} \sum_{\ell \text{ good}} |D_\ell| 2^{\ell-1}&=\frac{p_i^2}{16} \sum_{\ell \text{ good}} |D_\ell| 2^\ell \nonumber\\
    &\geq \frac{p_i^2}{16} \sum_{\ell \text{ good}} \sum_{v_j \in D_\ell} d_{k,i-1}(v_j) \nonumber\\
    &=\frac{p_i^2}{16} \left( 2|O_{k,i-1}|-\sum_{\ell\text{ not good}} \sum_{v_j \in D_\ell} d_{k,i-1}(v_j)\right). \label{eq:sumoverdl}
\end{align}
So it suffices to bound $\sum_{v_j \in D_\ell} d_{k,i-1}(v_j)$ for all $\ell$ that are not good. If $\ell \leq \log((\log n)^2/P_{k-1})$, then
\[
    \sum_{v_j \in D_\ell} d_{k,i-1} (v_j) \leq |D_\ell| 2^\ell \leq |D_\ell| \frac{(\log n)^2}{P_{k-1}} \leq \frac{(\log n)^2}{P_{k-1}} \cdot \varepsilon_k n.
\]
On the other hand, if $\ell$ is not good since $|D_\ell| <(\log n)^2/P_{k-1}$, then we find that
\[
    \sum_{v_j \in D_\ell} d_{k,i-1}(v_j) \leq |D_\ell| \max_{v_j \in D_\ell} d_{k,i-1}(v_j) \leq \frac{(\log n)^2}{P_{k-1}} \cdot \varepsilon_k n.
\]
Since there are are at most $\log n$ values of $\ell$, and in particular at most $\log n$ values of $\ell$ that are not good, we find that 
\[
    \sum_{\ell\text{ not good}} \sum_{v_j \in D_\ell} d_{k,i-1}(v_j) \leq \frac{(\log n)^3}{P_{k-1}} \cdot \varepsilon_k n.
\]
We can now plug this back into (\ref{eq:sumoverdl}). Doing so, we find that the number of open pairs in $V_k$ closed off during period $i$ is at least
\[
    |O_{k,i-1}|-|O_{k,i}| \geq \frac{p_i^2}{16} \left( 2 |O_{k,i-1}| - \frac{\varepsilon_k n (\log n)^3}{P_{k-1}}\right).
\]
We assumed that $o_{k,i-1} \geq 3(\log n)^3/P_{k-1} \varepsilon_k n$, which implies that
\[
    |O_{k,i-1}|=o_{k,i-1} \binom{|V_k|}2 \geq \frac{o_{k,i-1}|V_k|^2}{3} \geq \frac{\varepsilon_k n(\log n)^3}{P_{k-1}}.
\]
Putting this together, we find that
\[
    |O_{k,i-1}|-|O_{k,i}| \geq \frac{p_i^2}{16} |O_{k,i-1}|.
\]
Dividing out by $\binom{|V_k|}2$ gives the desired result for $o_{k,i}$.
\end{proof}

Now we are ready to accumulate the density decrements from this lemma, to show that $o_{k}$ is small in terms of $P_{k-1}$, as before. Recall that $o_k$ is the open density in $V_k$ just after the epoch $k-1$.

\begin{lem}\label{lem:ck-small-adaptive}
With probability at least $1-e^{-\Omega(\log n)^2}$, for all $k\ge 2$,
\[
o_k \le \max\left(\frac{3(\log n)^3}{\varepsilon_k P_{k-1} n}, e^{-\varepsilon_{k-1} P_{k-1}^2 n / 64}\right).
\]
\end{lem}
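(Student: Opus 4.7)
The plan is to iterate Lemma~\ref{lem:open-shrinking} across the periods of epoch $k-1$ and multiply the resulting density decrements, stopping once $o_{k,i-1}$ drops below the threshold $3(\log n)^3/(\varepsilon_k P_{k-1} n)$. The two cases in the ``max'' correspond precisely to whether we reach the end of epoch $k-1$ without hitting this threshold, or hit it at some intermediate period.

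First I would identify a set of \emph{good} periods $G = \{i \in I_{k-1} : p_i \ge P_{k-1}/2\}$. Since $P_{k-1}$ is the mean of $p_i$ over $I_{k-1}$, the bad periods contribute at most $\varepsilon_{k-1}n \cdot P_{k-1}/2$ to $\sum p_i$, so $\sum_{i \in G} p_i \ge \varepsilon_{k-1} n P_{k-1}/2$. Combining this with $p_i \ge P_{k-1}/2$ on $G$ gives the key inequality
\[
\sum_{i \in G} p_i^2 \;\ge\; \frac{P_{k-1}}{2} \sum_{i \in G} p_i \;\ge\; \frac{\varepsilon_{k-1} n P_{k-1}^2}{4}.
\]

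Next I would apply Lemma~\ref{lem:open-shrinking} to every $i \in G$ for which $o_{k,i-1}$ still exceeds the threshold. Each application succeeds with probability at least $1 - e^{-\Omega(\log n)^2}$, and there are at most $n$ such applications per $k$ and at most $m \le \log n$ values of $k$; a union bound shows that all of them succeed simultaneously with probability $1 - e^{-\Omega(\log n)^2}$. On the remaining periods (those bad, or with $o_{k,i-1}$ below the threshold), we use only the trivial monotonicity $o_{k,i} \le o_{k,i-1}$, which holds because open pairs can only become closed as the game progresses.

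Now I would split into cases. If $o_{k,i-1} \ge 3(\log n)^3/(\varepsilon_k P_{k-1} n)$ throughout epoch $k-1$, then Lemma~\ref{lem:open-shrinking} applies at every $i \in G$, so telescoping and using $1-x \le e^{-x}$ gives
\[
o_k \;\le\; \prod_{i \in G}\!\left(1 - \frac{p_i^2}{16}\right) \;\le\; \exp\!\left(-\frac{1}{16}\sum_{i \in G} p_i^2\right) \;\le\; \exp\!\left(-\frac{\varepsilon_{k-1} P_{k-1}^2 n}{64}\right).
\]
Otherwise, there is some first period $i' \in I_{k-1}$ with $o_{k,i'} < 3(\log n)^3/(\varepsilon_k P_{k-1} n)$, and monotonicity of $o_{k,i}$ gives $o_k \le o_{k,i'} < 3(\log n)^3/(\varepsilon_k P_{k-1} n)$. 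In either case $o_k$ is bounded by the maximum in the statement.

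The steps above are all essentially bookkeeping; the only subtlety is the stopping argument in the second case, which is needed because Lemma~\ref{lem:open-shrinking} offers no control once the open density becomes too small to apply the Coins and Buckets concentration effectively. The Cauchy–Schwarz-style lower bound on $\sum_{i\in G} p_i^2$ via the good-period truncation is the main quantitative input, and it is exactly what converts the linear-in-$P_{k-1}$ average into the $P_{k-1}^2$ in the exponent.
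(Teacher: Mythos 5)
Your proof is correct and follows essentially the same approach as the paper: truncate to the periods where $p_i \ge P_{k-1}/2$, apply Lemma~\ref{lem:open-shrinking} with a union bound, telescope the density decrements, and handle the early-stopping case by monotonicity of the open density. The only immaterial difference is that where the paper applies Cauchy--Schwarz to bound $\sum_{i} p_i^2$ from below over the good periods, you instead use the pointwise bound $p_i \ge P_{k-1}/2$ directly; both give the identical quantitative conclusion $\sum p_i^2 \ge \varepsilon_{k-1} n P_{k-1}^2/4$.
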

\begin{proof}
We will apply Lemma~\ref{lem:open-shrinking} using the fact that $P_{k-1}$ is the average value of $p_i$ across all $i$ in epoch $k-1$. If at any point $o_{k,i} \le \frac{3(\log n)^3}{\varepsilon_k P_{k-1} n}$, then we are done, since the open density can never increase. Also, we will ignore every period $i$ where $p_i < P_{k-1}/2$, and let $I_{k-1} ^*$ be the indices of the remaining periods.

The conditions of Lemma~\ref{lem:open-shrinking} are satisfied for each $i\in I_{k-1}^*$, so we have for these $i$,
\[
o_{k,i} \le \left(1-\frac{1}{16}p_i^2\right)o_{k,i-1}.
\]
Iterating over all $i\in I_{k-1}^*$ and noting that the open density is initially at most $1$, it follows that
\begin{equation}\label{eq:cumulative-open}
o_k \le \prod_{i\in I_{k-1}^*} \left(1-\frac{1}{16}p_i^2\right) \le \prod_{i\in I_{k-1}^*}\exp(-p_i^2/16) = \exp\left(-\sum_{i \in I_{k-1}^*}p_i^2 / 16\right).
\end{equation}

There are $\varepsilon_{k-1} n$ periods in epoch $k-1$, so the total sum of $p_i$ over $i \in I_{k-1}$ is $P_{k-1} \cdot \varepsilon_{k-1} n$, while the sum of those $p_i$ for which $p_i < P_{k-1}/2$ is at most half this amount. Thus, together with the Cauchy--Schwarz inequality,
\[
\sum_{i \in I_{k-1}^*}p_i^2 \ge \frac{1}{\varepsilon_{k-1} n} \cdot \left(\sum_{i \in I_{k-1}^*}p_i\right)^2 \ge \frac 14 P_{k-1}^2 \cdot \varepsilon_{k-1}n.
\]
Plugging into (\ref{eq:cumulative-open}), this implies that $o_k \leq e^{-\varepsilon_{k-1} P_{k-1}^2 n/64}$, as desired.
\end{proof}

\noindent From Lemmas~\ref{lem:pklarge-adaptive} and~\ref{lem:ck-small-adaptive}, we can complete the proof of Theorem~\ref{thm:Proposer-win-adaptive}.

\begin{proof}[Proof of Theorem~\ref{thm:Proposer-win-adaptive}]
We pick $m = \log ^* (n) + 1$, where $\log ^* (n)$ is the binary iterated logarithm function. We then pick $\varepsilon_k=1/(6 \cdot 2^k)$ for $1 \leq k \leq m-1$ and $\varepsilon_m=\varepsilon_{m-1}$, so that $\sum \varepsilon_k=1/6$. Moreover, since $\varepsilon_k \geq \varepsilon_m \geq 2^{-\log^*(n)}/6$, we get that $\varepsilon_k \geq 1/\log n$ for all $n$ sufficiently large, as we required. 

For each $k$, we may assume by Lemma~\ref{lem:pklarge-adaptive} that $P_{k-1} > 200/(o_{k-1} \sqrt{n})$, for otherwise Proposer will win the game w.h.p. Then with the above choices of $\varepsilon_k$ and this lower bound on $P_{k-1}$, Lemma~\ref{lem:ck-small-adaptive} implies a recursive bound on the open densities of the form
\begin{equation}\label{eq:open-recursion}
o_k \le \max\left( \frac{ 2^{k}(\log n)^3}{10\sqrt{n}} o_{k-1}, e^{-100/(2^k o_{k-1}^2)}\right),
\end{equation}

Suppose that for some $k\le m-1$, the first term in the maximum in (\ref{eq:open-recursion}) is larger. Then $o_k \le 2^{k}(\log n)^3 /10 \sqrt{n}$. Applying (\ref{eq:open-recursion}) again,
\[
o_{k+1} \le \max \left(\frac{2^{2k+1}(\log n)^6}{100 n}, \exp \left(-\frac{10000n}{2^{3k+1}  (\log n)^6}\right)\right) \le \frac{200}{\sqrt{n}}
\]
for $n$ sufficiently large, since $2^{3k} = O(\log n)$ for $k = O(\log^* n)$. It then follows by Lemma~\ref{lem:pklarge-adaptive} that Proposer can win in epoch $k+1$ w.h.p.

Thus, we may assume that the second expression on the right hand side of (\ref{eq:open-recursion}) is larger for every $k\le m-1$. Writing $x_k = -\log o_k$, the bound becomes
\[
x_k \ge \frac{100}{2^k} e^{2x_{k-1}},
\]
starting from $x_1 = 0$. It is easy to prove by induction that $x_k \ge t_k(2)$ for all $k\ge 2$, where $t_k(2)$ is a tower of $2$s of height $k$. Thus, $x_{m-1} \ge t_{\log^* n}(2) \ge n$. But then, $o_{m-1} = e^{-x_{m-1}}\le e^{-n}$, which implies that $o_{m-1} = 0$ since it must be a nonnegative rational with denominator less than $n^2$. Proposer wins by epoch $m-1$ in this case.

We have shown that Proposer guarantees w.h.p.\ the existence of an independent set of size $\frac{1}{1000} \sqrt{n} \log n$, as desired.
\end{proof}
\section{Concluding Remarks}
A natural open problem is to close the constant-factor gap in Theorem \ref{thm:main}. If we define $RPS(n)$ to be the largest $s$ for which Proposer can win with probability at least $1/2$ regardless of how Decider chooses to play, then Theorem \ref{thm:main} implies that $RPS(n)=\Theta(\sqrt n \log n)$.  We have made no attempt to optimize the constants $10^{-3}$ and $10^3$ that our proof provides, but even if we did, it is unlikely that they would match, meaning that we still don't fully understand the full asymptotic behavior of $RPS(n)$. In the language of thresholds, Theorem \ref{thm:main} asserts that $RPS(n)$ exhibits a coarse threshold; we conjecture that there is a sharp threshold.
\begin{conj}
    There is some constant $C>0$ so that 
    \[
        RPS(n)=(C+o(1))\sqrt n \log n.
    \]
    Further, for every $\varepsilon>0$, Proposer can win w.h.p.\ if $s < (C-\varepsilon)\sqrt n \log n$, while Decider can win w.h.p.\ if $s>(C+\varepsilon)\sqrt n \log n$ w.h.p.
\end{conj}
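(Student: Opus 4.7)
The plan is to sharpen both halves of Theorem~\ref{thm:main} until their constants meet at a common value $C$. On the Decider half, I would push the argument of Theorem~\ref{thm:Decider-win} to its sharp form. That proof uses Lemma~\ref{lem:common-neighbors} only with a very loose covering bound of $t^2/10$ pairs with a common neighbor outside $S$, which forces a wasteful union bound. A natural refinement is to replace the $t^2/10$ by $o(t^2)$ in the relevant regime $t \asymp \sqrt{n}\log n$, so that every reachable subgraph of $G(n,p)$ has essentially the same independence number as $G(n,p)$ itself. Combined with the sharp asymptotics for the independence number of $G(n,p)$, which by now are known up to the constant factor via the triangle-free process analyses of Bohman--Keevash and Fiz Pontiveros--Griffiths--Morris, this should yield a sharp Decider constant $C_D$.

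On the Proposer half, the strategy of Section~\ref{sec:Proposer-win} is explicitly wasteful in several places: the vertex partition into $U \sqcup V \sqcup A \sqcup B$ uses only a $1/6$ fraction productively, the Cauchy--Schwarz step in Lemma~\ref{lem:ck-small-adaptive} halves the effective YES-density, and the use of Shearer's bound (Theorem~\ref{thm:aks}) loses a further constant. To get a matching $C_P = C$, I would replace the discrete $\log^*(n)$-epoch scheme by an essentially continuous schedule, in which the open density $o$ and YES-rate $P$ evolve smoothly rather than in finitely many stages, and analyze the resulting trajectory via a differential-equation heuristic in the spirit of Wormald and Bohman. The aim would be to identify a closed-form ODE describing the optimal $(P, o)$ trajectory whose endpoint produces an independent set of size exactly $(C + o(1))\sqrt{n}\log n$.

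The main obstacle will be forcing $C_D = C_P$. Unlike in a purely random process, where concentration pins down sharp constants automatically, the adaptive nature of Ramsey, Paper, Scissors means that each player can react to the other, so what is required is a minimax identity rather than a single concentration inequality. I would look for a potential function tracking a ``game value'' that is monotone under both players' optimal play, and identify $C$ as the unique fixed point of the associated recursion; a necessary ingredient is a tight version of the bucket lemma (Lemma~\ref{lem:bucket}), with optimal constants in the exponent rather than the $1/20$ currently appearing. Finally, because the sharp leading constant of $r(3,s)$ itself is still open (only a factor of $4 + o(1)$ is known), it is quite plausible that $C$ depends on the true $r(3,s)$ constant, in which case a fully explicit numerical value is out of reach with present methods and one would instead characterize $C$ in terms of the triangle-free-process trajectory.
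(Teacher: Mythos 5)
The statement you are asked to prove is stated in the paper as a \emph{conjecture}; the paper offers no proof of it, and neither does your proposal. What you have written is a research program, not an argument: essentially every step is prefixed with ``I would,'' ``should yield,'' or ``the aim would be,'' and none of the constituent claims is actually established. That is an honest assessment of the difficulty, but it means there is nothing here to verify as a proof.

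Beyond the overall incompleteness, several of the individual steps would require ideas that do not currently exist and may not work as described. Sharpening Lemma~\ref{lem:common-neighbors} to say only $o(t^2)$ pairs in a $t$-set lack a common outside neighbor does not by itself pin down a Decider constant: what governs Decider's side is $\min_H \alpha(H)$ over reachable subgraphs $H$ of $G(n,p)$, an adversarial minimum, and showing it matches $\alpha(G(n,p))$ to within $1+o(1)$ is itself an open problem, not a corollary of the existing lemma. The Wormald/Bohman differential-equation method is designed for Markovian random processes with no adversary; here Decider adapts to the history, so one cannot write a single deterministic ODE for a trajectory of $(P,o)$ --- the whole point of Lemma~\ref{lem:bucket} is to wrest partial concentration from an adversarial process, and pushing that to sharp constants is precisely what is missing. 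The ``potential function with a unique fixed point'' and ``minimax identity'' are names for what one would want, not constructions; you give no candidate. Finally, your suggestion that $C$ depends on the sharp constant in $r(3,s)$ mismatches scales: $r(3,s)\sim c\,s^2/\log s$ corresponds to Proposer winning trivially at $s\asymp\sqrt{n\log n}$, a full factor $\sqrt{\log n}$ below $RPS(n)\asymp\sqrt{n}\log n$, so $C$ is more plausibly tied to the sharp independence-number asymptotics of triangle-free reachable subgraphs of $G(n,p)$ at $p\asymp n^{-1/2}$ than to the $r(3,s)$ constant. The conjecture remains open, and this proposal identifies some of the right difficulties without overcoming any of them.
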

\noindent Moreover, we suspect that Decider's strategy of playing randomly is optimal.

Finally, there is a natural extension of Ramsey, Paper, Scissors to the case where we forbid a subgraph other than the triangle. Specifically, for any fixed graph $H$, we define the $H$-RPS game to be exactly as before, except that Proposer can never propose a pair that, if added, would form a copy of $H$. As before, Proposer wins if the independence number is at least $s$ when Proposer has no legal moves remaining. From this, we can define the quantity $RPS(H;n)$ to be the maximum $s$ for which Proposer can win with probability at least $\frac 12$. Thus, the above discussion concerns the special case $RPS(K_3;n)$, and it would be natural to search for analogues of Theorem \ref{thm:main} for $RPS(H;n)$, where $H$ is some graph other than $K_3$. A natural question is whether Decider's random strategy is still close to optimal.

\end{document}